\documentclass[a4paper,12pt]{amsart}

\pagestyle{myheadings}

\usepackage[latin1]{inputenc}
\usepackage[T1]{fontenc}
\usepackage[english]{babel}
\usepackage{todonotes}

\usepackage[hyphens]{url}
\usepackage{hyperref}

\usepackage{amsmath,amssymb,amscd,amsthm,amsfonts,amstext,amsbsy,mathrsfs,upgreek,mathtools,stmaryrd,enumitem,bbm,ifsym} 

\usepackage{xspace}
\usepackage[all]{xy}
\usepackage{graphicx}
\usepackage{url}
\usepackage{latexsym}

\makeatletter
\newcommand*{\rom}[1]{\expandafter\@slowromancap\romannumeral {\sharp}1@}
\makeatother

\theoremstyle{definition}

\newcommand{\PP}{\mathbb{P}}








\DeclareMathAlphabet{\mymathbb}{U}{BOONDOX-ds}{m}{n}


\newcommand{\megj}[1]{}













\newtheorem{fact}{fact}

\newtheorem{thm}[fact]{Theorem}
\newtheorem{lemma}[fact]{Lemma}
\newtheorem{prop}[fact]{Proposition}
\newtheorem{corollary}[fact]{Corollary}
\newtheorem{defini}[fact]{Definition}
\newtheorem{remark}[fact]{Remark}

\newtheorem{question}[fact]{Question}

\title{Canonical Truth}

\author{Merlin Carl}
\address{Institut f\"ur mathematische, naturwissenschaftliche und technische Bildung, Abteilung f\"ur Mathematik und ihre Didaktik, Europa-Uni-versit\"at Flensburg, Geb\"aude RIGA 1, Auf dem Campus 1b, 24943 Flensburg}

\email{merlin.carl@uni-flensburg.de}

\author{Philipp Schlicht} 

\address{Philipp Schlicht, School of Mathematics, University of Bristol, Fry Building, Woodland Road, Bristol, BS8 1UG, UK} 

\email{philipp.schlicht@bristol.ac.uk} 

\keywords{Set theory, Mathematical realism, Philosophy of mathematics} 


\begin{document}

\maketitle

\begin{abstract}
We introduce and study some variants of a notion of canonical set theoretical truth. By this, we mean truth in a transitive proper class model $M$ of ZFC that is uniquely characterized by some $\in$-formula. We show that there are interesting statements that hold in all such models, but do not follow from ZFC, such as the ground model axiom and the nonexistence of measurable cardinals. 

We also study a related concept in which we only require M to be fixed up to elementary equivalence. We show that this theory-canonicity also goes beyond provability in ZFC, but it does not rule out measurable cardinals and it does not fix the size of the continuum.
\end{abstract}

\section{Introduction}

It is an old logical dream to devise an effectively describable axiomatic system for mathematics that uniquely describes `mathematical reality'; in modern logical language, this should mean
at least that it uniquely fixes a model.
It is well-known that this dream is unattainable in first-order logic: By the L\"owenheim-Skolem theorem, we get models of all infinite cardinalities once there is
one infinite model; and by G\"odel's incompleteness theorem, if the theory is strong enough to express elementary arithmetic, it will have different models that are not even elementary equivalent.

Focusing on ZFC set theory, one of the main foundational frameworks for mathematics, these two effects can in a certain sense be cancelled out by asking not for arbitrary models, but for
transitive models that are proper class-sized, i.e. contain all ordinals. When we restrict the allowed models in this way, there are extensions of ZFC that uniquely fix a model.
The most prominent example is $V=L$: It is well-known (and provable in ZFC) that ZFC+$V=L$ has exactly one transitive class-model, provided that ZFC is consistent.

This form of canonicity gives the axiom of constructibility a certain attractiveness: It seems to describe, up to the unavoidable weakness of first-order logic, a unique `mathematical reality'. However, 
it is usually seen as too restrictive since many objects of set-theoretical interest are ruled out under this assumption.

But $V=L$ is by far not the only theory that uniquely fixes a transitive class model: Other examples include $V=L[0^{\sharp}]$ and $V=L[x]$, where $x$ is an absolute $\Pi^{1}_{2}$-singleton (see below).
The `true mathematical reality' that the adherents of the logical dream mentioned in the beginning believe in would have to be one of those `canonical' models. Hence, whatever holds in all of these `canonical' models
will have to be accepted as true by someone who believes in a uniquely describable mathematical reality. We call such statements `canonically necessary'.
If there are no such statements that go beyond what is derivable from ZFC, then this kind of mathematical realism would be mathematically neutral:
the belief in a uniquely describable mathematical reality would merely be a way of interpreting set theory, without influencing it. On the other hand, if there are statements that hold in all canonical models without
following from ZFC, this realistic mindset would be mathematically informative.

In this paper, we investigate statements that hold in all `canonical' models of ZFC, i.e. in all transitive class models that are uniquely fixed by some extension of ZFC by finitely many extra statements.\footnote{After most of the work in this paper was done, we noticed that in \cite{HFriedman1} and \cite{HFriedman2}, H. Friedman defined and investigated a similar concept for countable set-sized models of a fixed height $\alpha$ under the name `$\beta$-categoricity'. However, there is otherwise no overlap in the settings, the questions considered and the results; in particular, in Friedman's setting, one of the main results is that $L$ is the unique `categorial' model (\cite{HFriedman1}, p. 543), while in our setting, there are infinitely many under sufficient large cardinal assumptions.} 
It turns out that the realistic mindset is indeed mathematically informative: Examples of canonically necessary statements that do not follow from ZFC are the ground model axiom of (\cite{R}) (Theorem \ref{gmacn})
and the non-existence of measurable cardinals (Theorem \ref{nomeasurables}).

This approach generalizes in a natural way to the concept of `canonical consequence': Namely, a sentence $\psi$ is a `canonical consequence' of a theory $T$ if and only if, for all $\phi$ such that $T+\phi$ has (provably in ZFC\footnote{It would also be natural to replace ZFC by $T$ here as well; but for the time being, we keep ZFC as our base theory.}) exactly one transitive class model $M$, we have $M\models\psi$.

One can then ask for the canonicity of the ZFC axioms themselves: Are there proper subsystems $T$ of ZFC that canonically imply the ZFC axioms? We give some preliminary results in this respect.\footnote{A related question is which subsystems $T$ of ZFC have the property that ZFC holds in all inner models of $T$. One result in this direction, due to Philip Welch, will be given below.}

A natural weakening of the concepts of canonical truth and consequence would be to merely demand that the theory of the transitive class model $M$, rather than $M$ itself, is uniquely determined by an $\in$-sentence. Let us say that an $\in$-theory is canonical if and only if there is an $\in$-sentence $\phi$ such that all transitive class models of ZFC+$\phi$ are elementary equivalent. Then we can say that a sentence $\psi$ is `theory-canonical' if and only if it is contained in all canonical theories. If we replace ZFC by some $\in$-theory $T$ in this definition, we obtain the notion of a $T$-canonical theory. We can then say that a sentence $\psi$ is a `theory-canonical consequence of $T$' if and only if  it is contained in all $T$-canonical theories. This `theory canonicity' turns out to be strictly weaker than canonicity (for example, it does not exclude measurable cardinals), but is still informative: There are theory-canonical consequences of ZFC that are not first-order consequences of ZFC.

We conclude with various open questions; in particular, we do not know whether 
the continuum hypothesis is canonically necessary (we conjecture that it is not) or whether there are canonical models of ZF+$\neg$AC (i.e. whether the axiom of choice is canonically necessary over ZF).

\section{Basic Definitions}

We start by giving a formal counterpart to the intuitive idea that a theory $T$ `uniquely fixes a transitive class model' and `uniquely fixes a transitive class model up to elementary equivalence'. This is not straightforward, as quantifying over proper classes
is not possible in ZFC. This might be solvable by instead working in NBG, but we prefer to stick to ZFC for the moment, partly because the methods we intend to use (forcing, class forcing and inner models)
are commonly developed for ZFC models. Thus, a proper class model of ZFC will always be an inner model of $V$. Of course, this will immediately trivialize our analysis
when one assumes $V=L$, so that $L$ is the only transitive class model. To get a sufficient supply of inner models, we will hence assume sufficient large cardinals in our metatheory.

Still, we need to deal with our inability, due to the lack of a truth predicate, to quantify over all inner models. This will be solved by formulating the uniqueness not as a single statement,
but as a scheme. This leaves us with the problem of expressing that the class defined by a formula $\phi$ is a model of ZFC. Again, this is not trivial, since ZFC is not finitely axiomatizable.
Fortunately, for the case we are interested in, there is a workaround:

\begin{lemma}{\label{ZFmodelchar}}
[See \cite{Je}, Theorem $13.9$.]
 A transitive class $C$ is a model of ZF if and only if $C$ is closed under G\"odel operations and almost universal (i.e. for every subset $x\subseteq C$, there is $y\in C$ with $x\subseteq y$).
\end{lemma}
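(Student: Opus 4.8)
The plan is to prove the two implications separately; the forward one is routine, and essentially all of the work lies in the converse. For ``$C\models\mathrm{ZF}$ implies $C$ closed under G\"odel operations and almost universal'': each G\"odel operation $\mathcal{G}$ is given by a fixed ($\Delta_0$) formula which $\mathrm{ZF}$ proves defines a total function and whose value is absolute for transitive models of a sufficient fragment of $\mathrm{ZF}$; hence for $a,b\in C$ we get $\mathcal{G}(a,b)^{C}=\mathcal{G}(a,b)$ and $C\models\exists z\,(z=\mathcal{G}(a,b))$, so $\mathcal{G}(a,b)\in C$. For almost universality, given a set $x\subseteq C$ the ordinals $\operatorname{rank}(u)$ for $u\in x$ form a set (Replacement in $V$), bounded by some $\alpha$; since a transitive proper class model of $\mathrm{ZF}$ contains every ordinal, $V_\alpha^{C}$ is defined in $C$, and by absoluteness of the cumulative hierarchy $V_\alpha^{C}=V_\alpha\cap C\in C$ is a set containing $x$.

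For the converse I would first record the classical \emph{G\"odel definability lemma}: if the transitive class $C$ is closed under G\"odel operations, then for every $\Delta_0$ formula $\varphi(v_1,\dots,v_n)$ and all $X_1,\dots,X_n\in C$ the set $\{\bar a\in X_1\times\dots\times X_n:\varphi(\bar a)\}$ belongs to $C$. This is proved by induction on $\varphi$, the content being that the G\"odel operations were chosen precisely so as to be closed under the operations on definable relations corresponding to $=$, $\in$, $\neg$, $\wedge$ and bounded quantifiers (unordered pairs, products, set difference, intersection, domain, and permutation of coordinates); it is technical but standard. Since $\Delta_0$ formulas are absolute between $C$ and $V$, one also gets $\{\bar a\in\prod_i X_i:\varphi^{C}(\bar a)\}\in C$.

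The crucial step is to remove the $\Delta_0$ restriction: for \emph{every} formula $\varphi(v_1,\dots,v_n)$ and all $X_1,\dots,X_n\in C$ the set $S_\varphi:=\{\bar a\in\prod_i X_i:\varphi^{C}(\bar a)\}$ lies in $C$. I would prove this by induction on $\varphi$: the base case is the G\"odel definability lemma, and $\neg$ and $\wedge$ are handled by closure under set difference and intersection. The only genuine difficulty -- and what I expect to be the main obstacle, together with getting the G\"odel definability lemma exactly right -- is an unbounded existential quantifier $\varphi=\exists y\,\psi$. The idea is to use almost universality to \emph{confine the witnesses to a single set of $C$}: with $Z:=\prod_i X_i\in C$, for each $\bar a\in Z$ admitting a witness let $r(\bar a)$ be the least $V$-rank of a $y\in C$ with $\psi^{C}(y,\bar a)$; the range of $r$ is a set of ordinals, bounded by some $\gamma$, so every such $\bar a$ has a witness in $W:=V_{\gamma+1}\cap C$, a set included in $C$. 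By almost universality pick $V^{*}\in C$ with $W\subseteq V^{*}$; then for $\bar a\in Z$ we have $\exists y\in C\;\psi^{C}(y,\bar a)\iff\exists y\in V^{*}\;\psi^{C}(y,\bar a)$. Hence $S_\varphi$ is the projection onto the $Z$-coordinates of $R:=\{(y,\bar a)\in V^{*}\times Z:\psi^{C}(y,\bar a)\}$, and $R\in C$ by the induction hypothesis while projections are obtained from G\"odel operations, so $S_\varphi\in C$.

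It then remains to read off the axioms of $\mathrm{ZF}$ in $C$. Extensionality and Foundation hold because $C$ is transitive. Pairing is a single G\"odel operation, so $\{a,b\}\in C$ for $a,b\in C$. For Union, $\bigcup a\subseteq C$ whenever $a\in C$, so by almost universality $\bigcup a\subseteq z$ for some $z\in C$, and then $\bigcup a=\{u\in z:\exists v\in a\,(u\in v)\}\in C$ by the G\"odel definability lemma. Consequently $C$ is closed under $x\mapsto x\cup\{x\}$, so (using $\varnothing\in C$) every natural number lies in $C$; trapping the set of natural numbers in some $z\in C$ and using that ``$x$ is a natural number'' is $\Delta_0$ gives $\omega\in C$, hence Infinity. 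For Power Set, $\mathcal{P}(a)\cap C\subseteq C$ is contained in some $z\in C$ and equals $\{y\in z:y\subseteq a\}\in C$. Separation for a formula $\varphi$ with parameters in $C$ is essentially the instance $S_\varphi\in C$ proved above, and Replacement follows from it: given $a\in C$ with a $C$-definable class function on $a$, the image is a set by Replacement in $V$, is included in some $z\in C$ by almost universality (its members having bounded rank), and is carved out of $z$ using $S_\varphi$.
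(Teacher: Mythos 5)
Correct: the paper itself gives no argument for this lemma, citing Jech's Theorem 13.9, and your proof is essentially the standard one found there --- the easy forward direction via absoluteness and $V_\alpha\cap C$, then the G\"odel definability lemma for $\Delta_0$ formulas, the key use of almost universality to bound unbounded existential quantifiers by a set $V^{*}\in C$ so that relativized separation holds for arbitrary formulas, and finally reading off the ZF axioms. The only point worth flagging is that (as in the paper's setting of inner models) the equivalence is meant for transitive \emph{proper class} models containing all ordinals, which your argument implicitly and correctly uses in both directions.
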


We fix a natural enumeration $(\psi_{i}:i\in\omega)$ of the $\in$-formulas in order type $\omega$.

\subsection{Canonical Implication}

\begin{defini}
Let $\phi$ be an $\in$-formula, $i,j\in\omega$.
Let $\text{IM}_{i}^{\text{ZFC}}(\phi
,y)$ (`inner model') abbreviate the statement `$M_{\psi_{i},y}:=\{x:\psi_{i}(x,y)\}$ is transitive, almost universal, closed under G\"odel operations,
contains all ordinals and satisfies AC and $\phi$'. More generally, when $T$ is an $\in$-theory, we let $\text{IM}_{i}^{\text{T}}(\phi
,y)$ denote the claim that $M_{\psi_{i},y}$ is a transitive class model of $T$.

The uniqueness statement $U_{ij}^{\phi
,T}$ is the following $\in$-formula: $$\forall{y,y^{\prime}}[(\text{IM}_{i}^{T}(\phi
,y) \wedge \text{IM}_{j}^{T}(\phi
,y^{\prime}))\rightarrow \forall{x}(\psi_{i}(x,y)\leftrightarrow\psi_{j}(x,y^{\prime}))].$$

Now, $\phi$ 
 is a uniqueness statement over a theory $T$ if and only if all elements of $U_{\phi}^{T}:=\{U_{ij}^{\phi
,T}:i,j\in\omega\}$ are provable in 
T\footnote{Alternatively, we could also demand that all elements of $U_{\phi}^{T}$ hold in $V$. We will take up this idea below as $C^{1}$-canonicity.}. When $T$ is ZFC, we will usually drop `over $T$'.

Moreover, for $T$ an extension of KP, $\phi$ 
is a $T$-canonical statement if and only if there is some $i\in\omega$ such that $\exists{y}\text{IM}_{i}(\phi,y)$ and $\phi$
 is a uniqueness statement. When $T$ is ZFC, we simply call $\phi$ 
canonical.

\begin{remark}
Note that we do \textit{not} require $\exists{y}\text{IM}_{i}(\phi,y)$ to be provable in $T$; we only want it to be true (in $V$). These existence statements will usually be derived from stronger meta-theories, such as ZFC with large cardinals.
\end{remark}

\end{defini}

\begin{remark} Typically, $T$ will just be ZFC. Below, we will also consider cases where $T$ is much weaker; however, some base theory is necessary to exclude unwanted cases such as `$\in$ is a total ordering' (which has $\text{On}$ as its only transitive proper class model) from our consideration. 
\end{remark}

The paradigmatical example for a uniqueness statement is $V=L$, which is a $\Pi_2$-statement. This is indeed the minimal complexity for a uniqueness statement:

\begin{prop}
No $\Sigma_2$-sentence is a uniqueness statement.
\end{prop}
\begin{proof}
Suppose otherwise, let $\psi\equiv\exists{x}\forall{y}\phi(x,y)$ be a uniqueness statement, where $\phi$ is $\Delta_0$, and let $M$ be the unique transitive class model of ZFC+$\psi$. Pick $a\in M$ such that $M\models\forall{y}\phi(a,y)$, and let $\kappa\in\text{Card}$ be large enough so that $a\in V_{\kappa}^{M}$. As a cardinal, $\kappa$ is $\Sigma_1$-reflecting, so $V_{\kappa}^{M}\models\forall{y}\phi(a,y)$. 
Let $\mathbb{P}_{\kappa^{+}}$ be the forcing for adding a new subset of $\kappa^+$ described in (\cite{Kunen}, section $6$) (i.e., the set of partial functions $f:\kappa^{+}\rightarrow\{0,1\}$ of cardinality $<\kappa^{+}$), and let $M[G]$ be a generic extension for $\mathbb{P}_{\kappa}$. Since $\mathbb{P}_{\kappa^{+}}$ is $\kappa$-closed, we have $V^{M[G]}_{\kappa}=V^{M}_{\kappa}$; thus $V^{M[G]}_{\kappa}\models\forall{y}\phi(a,y)$, and since $\kappa$ is also a cardinal in $M[G]$, we have that $M[G]\models\forall{y}\phi(y,a)$ by $\Sigma_1$-reflection. Thus $M[G]\models$ZFC+$\psi$, contradicting the assumption that $\psi$ is a uniqueness statement.
\end{proof}

\begin{defini}
A statement $\phi$ is canonically necessary (c.n.) if and only if $\phi$ holds in all canonical models of ZFC.

A statement $\phi$ is canonically possible (c.p.) if and only if there is a canonical model $M\models\phi$ of ZFC, i.e., if and only if its negation is not canonically necessary. 

\end{defini}

\begin{defini}
Let $T$ be an extension of KP.

If $M$ is a transitive class model of T, then $M$ is $T$-canonical if 
and only if there is a $T$-canonical statement $\phi$ such that $M\models\phi$.

If $A$ is any $\in$-theory and $\phi$ is an $\in$-statement, 
then $\phi$ canonically follows from $A$ if and only if $\phi$ holds in all canonical models in which $A$ holds. In this case, we write $A\models_{c}\phi$.
\end{defini}

The preceding notions can be generalized by replacing the single tc statement $\phi$ with elements from a class $\mathcal{T}$ of theories. We would then, e.g., say that $T\models_{c}^{\mathcal{T}}\phi$ if and only if $\phi$ holds in every transitive proper class model of $T$ that is fixed by some element of $\mathcal{T}$. 
Particularly interesting cases might be the set of recursive theories, of countable theories, of $\Sigma_n$-axiomatizable theories, or even the class of all $\in$-theories with ordinal parameters.
However, in this work, we will focus on the singleton case and only briefly mention when our results easily generalize to other variants.

For a class $\mathcal{A}$ of $\in$-theories with ordinal parameters, we say that a formula $\phi$ is 
$\mathcal{A}$-canonical, written ZFC$\models_{c}^{\mathcal{C}}\phi$, if and only if $\phi$ holds in all transitive class models $M$ of ZFC such that, for some $T\in\mathcal{A}$, $M$ is the unique transitive class model of ZFC+T.\footnote{Note that we do not require that the uniqueness of the respective models is provable in ZFC, but only that it is true (in $V$). This relaxation seems unavoidable, as no analogue of the schematic approach taken for single statements is available for arbitrary theories. }

Let $\mathcal{C}$ denote the class of 
$\in$-theories with ordinal parameters. 
For a cardinal $\kappa$, $\mathcal{C}^{\kappa}$ denotes the subclass of $\mathcal{C}$ consisting of the elements with cardinality $\kappa$.\footnote{Note that, by the last footnote, $\mathcal{C}^{1}$-canonical necessity is not the same as canonical necessity.} 
Finally, let $\mathcal{C}^{L}$ be the class of constructible countable $\in$-theories.

\subsubsection{Weak Canonicity}

In our definition above, we required that ZFC must be capable of proving
the uniqueness of a model of ZFC+$\phi$. A somewhat reasonable weaker requirement would be that ZFC+$\phi$ proves this.

\begin{defini}
 $\phi$ is weakly canonical if and only if ZFC+$\phi$ proves the uniqueness statements in the definition of canonicity. If $M$ is a model of ZFC+$\phi$
for some weakly canonical $\phi$, then $M$ is weakly canonical. If $\psi$ holds in all weakly canonical models, then $\psi$ is weakly canonically necessary (weakly c.n.).
\end{defini}

\subsection{Theory-canonical implication}

We now give a formal counterpart to the claim that a statement $\phi$ fixes a transitive class model of a theory $T$ up to elementary equivalence. 
Intuitively, a sentence $\phi$ is `theory-canonical' (tc) if and only if it is provable in ZFC that any two inner models $M$ and $N$ of ZFC+$\phi$ are elementary equivalent.
Formally, we again need to express this as a scheme: Whenever $\phi_{0}$ and $\phi_{1}$ define transitive proper classes $M_{0}$ and $M_{1}$ in which $\phi$ holds, 
we have that ZFC proves $\psi^{M_0}\leftrightarrow\psi^{M_1}$.

\begin{defini}{\label{theory canonical}}

For $i,j,k\in\omega$, $\phi$ an $\in$-formula 
and $T$ an $\in$-theory, the theory-uniqueness statement TU$^{\phi,
T}_{ijk}$ is the following $\in$-formula:
$$\forall{y,y^{\prime}}[(\text{IM}_{i}^{T}(\phi
,y)\wedge\text{IM}_{j}^{T}(\phi
,y^{\prime}))\rightarrow(\psi_{k}^{M_{\psi_{i},y}}\leftrightarrow\psi_{k}^{M_{\psi_{j},y^{\prime}}})].$$

An $\in$-sentence $\phi$ 
is a theory-uniqueness statement for $T$ if and only if ZFC proves all elements of $\text{TU}_{\phi}^{T}:=\{\text{TU}_{ijk}^{\phi
}:i,j,k\in\omega\}$.

We say that a theory-uniqueness sentence $\phi$ for $T$ 
is theory-canonical (tc) over a theory $T$ if and only if $\exists{y}\text{IM}_{i}\phi(\phi
,y)$ is true (in $V$) for some $i\in\omega$. 

For theories $T$ and $S$, we say that $S$ is a canonical extension of $T$ if and only if there is a theory-canonical sentence $\phi$ over $T$ such that $S$ is the (unique) theory of the transitive class models of $T+\phi$. When $T$ is ZFC, we call $S$ a canonical theory. 

 A transitive class model $M$ of $T$ is tc over $T$ if and only if some $\phi$ holds in $M$ which is tc over $T$.
 
 A sentence $\psi$ is `theory-canonically necessary' (tcn) if and only if $\psi$ 
 belongs to all canonical extensions of ZFC. 
 Likewise, $\psi$ is `theory-canonically possible' (tcp) if and only if it belongs to some canonical extension of ZFC. 
\end{defini}

We can then extend this as above to obtain a notion of `theory-canonical implication':

\begin{defini}
If $T$ is an $\in$-theory and $\phi$ is an $\in$-sentence, then we say that $\phi$ is a `theory-canonical consequence (tc-consequence) of $T$', or that $T$ `tc-implies $\phi$' if and only if 
we have $M\models\phi$ for every transitive class model $M$ that is tc over $T$. In this case, we write $T\models_{\text{tc}}\phi$.
\end{defini}

As above, the notions of theory-canonicity and tc-implication can be generalized to theories, rather than single statements.

\subsection{General properties of $\models_{c}$ and $\models_{\text{tc}}$}

In this section, we consider some general logical and proof-theoretic properties of $\models_{c}$ and $\models_{\text{tc}}$.

\begin{defini}
For a theory $T$, we let $\mathcal{C}(T):=\{\phi:T\models_{c}\phi\}$ and $\mathcal{TC}(T):=\{\phi:T\models_{\text{tc}}\phi\}$ be the sets of canonical and theory-canonical consequences of $T$, respectively. 
\end{defini}

Note that, for any $T$, we have $\mathcal{C}(T)\subseteq\mathcal{CT}(T)$. It is easy to see that both $\models_{c}$ and $\models_{\text{tc}}$ are closure operators on the set of $\in$-theories.\footnote{Note a slight pathology for theories $T$ that do not have (theory-)canonical models: For such theories, the definition of (theory-)canonical implication universally quantifies over the empty set and is thus vacuously fulfilled for every statement $\phi$, so that, in this case, we have that $\mathcal{C}(T)$ and $\mathcal{TC}(T)$ both coincide with the set of all $\in$-sentences.}

Clearly, for any theory $T$, $\mathcal{C}(T)$ and $\mathcal{TC}(T)$ are closed under first-order inferences. There are, however, additional inference rules with this property, such as the $\omega$-rule (for which, however, the restriction to transitive models suffices). 
We do not know whether there are inference rules that are sound for $\models_{c}$ or $\models_{\text{tc}}$ but not already for transitive models.

We also note that, again due to the restriction to transitive models, compactness fails for both notions. Since in the setting of (theory-)canonicity, we want to keep our basic theory even if it is infinite, we define the relevant compactness property as follows:

\begin{defini}
Let $T$ be an $\in$-theory. We say that $\models_{\text{c}}$ has the compactness property over $T$ if and only if, for every set $S$ of $\in$-sentences $\phi$, $T\cup S\models_{c}\phi$ implies that there is a finite subset $S^{\prime}\subseteq S$ such that $T\cup S^{\prime}\models_{c}\phi$. The definition for $\models_{\text{tc}}$ is analogous.
\end{defini}

\begin{prop}{\label{not compact}}
Neither $\models_{c}$ nor $\models_{\text{tc}}$ have the compactness property. That is, there are a theory $T$ and an $\in$-sentence $\phi$ such that $T\models_{\text{tc}}\phi$ (and thus $T\models_{\text{c}}\phi$), but there is no finite subtheory of $T^{\prime}$ of $T$ such that $T^{\prime}\models_{\text{c}}\phi$ (and thus, neither $T^{\prime}\models_{c}\phi$). 
\end{prop}
\begin{proof}
Define $0^{\sharp,i}$ as the $i$-th iterate of the sharp operator applied to $0$. 
For $i\in\omega$, let $\phi_{i}$ be the sentence `$0^{\sharp,i}$ exists',  let $S:=\{\phi_{i}:i\in\omega\}$, let $T$ be ZFC$\cup $S, and let $\psi$ be the sentence `For every $i\in\omega$, $0^{\sharp,i}$ exists'. Then $T\models_{\text{tc}}\psi$ (and hence $T\models_{\text{c}}\psi$). 

However, if $S^{\prime}$ is a finite subset of $S$, then there is a $k\in\omega$ that is maximal with the property that $\phi_{k}\in S^{\prime}$. But now, $V=L[0^{\sharp,k}]$ is a uniqueness statement over ZFC and in the unique transitive class model of $V=L[0^{\sharp}]$, $\psi$ clearly fails.
Hence $T^{\prime}\not\models_{\text{c}}\psi$ (and thus also $T^{\prime}\not\models_{\text{c}}\psi$). 

Thus, $T$ and $\psi$ are as desired.
\end{proof}

Since compactness fails, there cannot be sets of finite inference rules (i.e., rules with finitely many premises) that are both sound and complete for $\models_{c}$ or $\models_{\text{tc}}$.

\begin{prop}{\label{not compact 2}}
There is an infinite $\in$-theory $T$ such that, for any finite subset $S\subseteq T$, ZFC+S has a canonical model, but ZFC+T does not.
\end{prop}
\begin{proof}
Let $T$ and $\psi$ be as in the proof of Proposition \ref{not compact}, then $T+\neg\psi$ is as desired.
\end{proof}

By taking $\psi$ to be $\bot$, Proposition \ref{not compact} follows from Proposition \ref{not compact 2}. However, we consider the given presentation to be more illustrative.

\section{Examples of Canonical Truth}

Obvious examples for uniqueness statements are $V=L$ or $V=L[0^{\sharp}]$ with corresponding canonical models $L$ and $L[0^{\sharp}]$. These actually give rise to a larger class of examples:

\begin{defini}
 A real number $x$ is a relative $\Pi^{1}_{2}$-singleton if and only if there is a $\Pi^{1}_{2}$-statement $\phi$ such that $x$ is the unique element $y$ of $L[x]$ with $L[x]\models\phi(y)$.

A real number $x$ is an absolute $\Pi^{1}_{2}$-singleton\footnote{For the notion of an absolute $\Pi^1_2$-singleton, cf., e.g., David \cite{David}.} if and only if there is a $\Pi^{1}_{2}$-statement $\phi$ such that $x$ is the unique element $y$ of $V$ with $\phi(y)$.
\end{defini}

\begin{corollary}
An absolute $\Pi^{1}_{2}$-singleton $x$ is the unique element satisfying its defining $\Pi^{1}_{2}$-formula $\phi$ in each transitive inner model that contains $x$, while
all other models will not contain such a witness.
\end{corollary}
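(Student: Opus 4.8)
The plan is to reduce the entire statement to Shoenfield's absoluteness theorem. By the conventions of Section~2, a transitive inner model $M$ is a transitive proper class model of ZFC, hence contains every ordinal, in particular every countable ordinal. So Shoenfield absoluteness applies to $M$ in the following shape: for every $\Sigma^1_2$ or $\Pi^1_2$ formula $\psi$ of second-order arithmetic and every real parameter $a\in M$, one has $M\models\psi(a)$ if and only if $\psi(a)$ holds in $V$. The justification is the classical tree argument: $\psi(a)$ is equivalent to the well-foundedness of a tree $T_\psi(a)$ on $\omega\times\omega_1$ whose definition mentions only $a$ and ordinals below $\omega_1$; since $M$ contains $a$ and all these ordinals and computes that tree correctly, and since well-foundedness of a tree is absolute between transitive models of a sufficiently strong fragment of ZF, the two models must agree on $\psi(a)$. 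Only a weak fragment of ZF together with ``$\omega_1$ exists'' enters here, so it is immaterial how much of ZFC we actually require $M$ to satisfy.

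Granting this, let $\phi$ be the $\Pi^1_2$-formula witnessing that $x$ is an absolute $\Pi^1_2$-singleton, so that $x$ is the unique real $y$ with $\phi(y)$. Suppose first that $M$ is a transitive inner model with $x\in M$. Since $V\models\phi(x)$ and $x\in M$, Shoenfield absoluteness gives $M\models\phi(x)$, so $x$ witnesses $\phi$ inside $M$. If moreover $y\in M$ and $M\models\phi(y)$, then applying Shoenfield absoluteness to the parameter $y\in M$ yields $\phi(y)$ in $V$, and uniqueness of $x$ in $V$ forces $y=x$; thus $x$ is the unique element of $M$ satisfying $\phi$ there. If instead $M$ is a transitive inner model with $x\notin M$ and some $y\in M$ satisfied $M\models\phi(y)$, the same absoluteness argument would give $\phi(y)$ in $V$, hence $y=x$ and so $x\in M$, a contradiction; therefore $M$ contains no witness for $\phi$.

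The step I expect to draw the most scrutiny is the passage from the familiar formulation of Shoenfield absoluteness between $V$ and $L$ to its version between $V$ and an arbitrary inner model. The point to make explicit is that the Shoenfield tree is built solely from the real parameter and from countable ordinals, all of which lie in any proper class transitive model, and that such a model is correct about well-foundedness; once this is noted, the argument is literally the textbook one. A secondary, purely bookkeeping point is to keep $\phi$ a formula in a single free real variable, so that substituting parameters from $M$ leaves the relevant statements in $\Pi^1_2$ (equivalently, their negations in $\Sigma^1_2$).
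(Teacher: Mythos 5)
Your proposal is correct and follows essentially the same route as the paper's own proof: both reduce everything to Shoenfield absoluteness of $\Pi^{1}_{2}$ (and $\Sigma^{1}_{2}$) statements between $V$ and any transitive proper class inner model, and then invoke uniqueness of $x$ in $V$. Your write-up is in fact slightly more complete, since you explicitly use the downward direction to conclude $M\models\phi(x)$ when $x\in M$, a step the paper leaves implicit.
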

\begin{proof}
 By Shoenfield absoluteness, if $M$ is a transitive class model of $ZF^{-}$, $x\in M$ and $M\models\phi(x)$, then $V\models\phi(x)$. Hence, if some transitive inner model had two distinct elements
satisfying $\phi$, the same would hold for $V$, contradicting uniqueness. Similarly, if $M$ was some transitive inner model with $x\notin M$ but $M\models\phi(y)$ for some $y\in M$,
then $V\models\phi(x)\wedge\phi(y)\wedge x\neq y$, again contradicting the uniqueness.
\end{proof}

The existence of $0^{\sharp}$ has consistency strength. However, no such assumption is needed to obtain canonical models beyond the constructible universe:

\begin{prop}
It is consistent relative to ZFC that there are canonical models besides $L$.
\end{prop}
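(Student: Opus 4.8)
The plan is to produce, consistently relative to $\mathrm{ZFC}$ and with no large cardinal input, a model $W$ of $\mathrm{ZFC}$ in which some non-constructible real $a$ is provably uniquely definable by a $\Pi^1_2$-formula; inside $W$ the inner model $L[a]$ will then be a canonical model different from $L$, which suffices.

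\textbf{Step 1: the generic singleton.} I would invoke Jensen's forcing construction of a $\Pi^1_2$-singleton of minimal Turing degree (R.~B.~Jensen, \emph{Definable sets of minimal degree}, 1970; see also Devlin, \emph{Constructibility}, Ch.~VI, and \cite{Je}): working over $L$ there is a c.c.c.\ forcing notion $\Perre\in L$, definable over $L$ from $\diamondsuit$, such that if $G$ is $\Perre$-generic over $L$ then in $L[G]$ there are a real $a\notin L$ and a lightface $\Pi^1_2$-formula $\phi$ with (i) $L[G]\models\phi(a)$, and, crucially, (ii) $\mathrm{ZFC}$ proves the single sentence $\forall x\forall x'\,\bigl((\phi(x)\wedge\phi(x'))\to x=x'\bigr)$, i.e.\ $\phi$ provably has at most one solution. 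Since $\Perre$ is c.c.c.\ we have $L[G]\models\mathrm{ZFC}$, so assuming only $\mathrm{Con}(\mathrm{ZFC})$ we may fix a model $W\models\mathrm{ZFC}$ in which such $a,\phi$ exist; note $a\in L[a]\setminus L$, so $L\neq L[a]$ holds in $W$.

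\textbf{Step 2: $L[a]$ is canonical.} Let $\psi$ be the $\in$-sentence ``$\exists x\,(\phi(x)\wedge V=L[x])$''. First, $L[a]\models\psi$: since $L[a]$ is an inner model of $W$ containing $a$ and $W\models\phi(a)$, Shoenfield absoluteness for $\Pi^1_2$-formulas between $W$ and $L[a]$ gives $L[a]\models\phi(a)$, while $L[a]\models V=L[a]$ is immediate. Next, $\psi$ is a uniqueness statement, i.e.\ $\mathrm{ZFC}$ proves every $U^{\psi}_{ij}$: arguing in $\mathrm{ZFC}$, if $M,M'$ are transitive class models of $\mathrm{ZFC}+\psi$, pick witnesses so that $M=L[b]$ with $M\models\phi(b)$ and $M'=L[b']$ with $M'\models\phi(b')$; Shoenfield absoluteness between $V$ and the inner models $M,M'$ yields $\phi(b),\phi(b')$ in $V$, so $b=b'$ by (ii), and then $M=L[b]^{M}=L[b]=L[b']^{M'}=M'$ by absoluteness of the relative constructible hierarchy for transitive models of $\mathrm{ZF}$ containing that real. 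As (ii) is a single $\mathrm{ZFC}$-theorem, this reasoning formalizes in $\mathrm{ZFC}$. Hence $L[a]$ is a transitive class model of $\mathrm{ZFC}$, satisfies $\psi$, and is the unique transitive class model of $\mathrm{ZFC}+\psi$ in $W$, so it is a canonical model $\neq L$. (Equivalently, (i) and (ii) make $a$ an absolute $\Pi^1_2$-singleton in $W$, and the Corollary above applies directly; and for the $T$-canonical reading take $T=\mathrm{ZFC}+\exists x\,\phi(x)$, consistent relative to $\mathrm{ZFC}$ by Step~1, which proves $\exists y\,\mathrm{TM}_i(\psi,y)$ via $\psi_i(x,y)\equiv x\in L[y]$ and $y=a$.)

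\textbf{Main obstacle.} Everything outside Step~1 is a routine application of Shoenfield absoluteness and of the absoluteness of $L[\,\cdot\,]$; the real content is property (ii), and securing it is exactly where Jensen's delicate $\diamondsuit$-based construction is unavoidable, since no ``cheap'' generic real (Cohen, random, Sacks, \dots) is a provably unique $\Pi^1_2$-singleton. A minor point to verify carefully is that in Jensen's extension $L[G]=L[a]$, so that the canonical model obtained really is $L[a]$ and the identifications $L[b]^{M}=L[b]$ used above are the standard ones.
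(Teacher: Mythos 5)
There is a genuine gap, and it sits exactly at the point you yourself flag as the crux, namely property (ii). Jensen's ccc forcing $\mathbb{P}$ from \emph{Definable sets of minimal degree} does \emph{not} yield a $\Pi^1_2$-formula whose uniqueness is provable in ZFC: Jensen's theorem only gives that \emph{inside the extension} $L[a]$ the generic real is the unique $\mathbb{P}$-generic real over $L$. ZFC does not prove that the natural $\Pi^1_2$-formula ``$x$ is $\mathbb{P}$-generic over $L$'' has at most one solution; forcing with $\mathbb{P}\times\mathbb{P}$ (or with $\mathbb{P}$ twice in succession) produces a model containing two distinct $\mathbb{P}$-generic reals $g_1\neq g_2$, and then $L[g_1]$ and $L[g_2]$ are two distinct transitive class models of ZFC plus your sentence $\psi\equiv\exists x(\phi(x)\wedge V=L[x])$. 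Hence the scheme $U^{\psi}_{ij}$ is not provable in ZFC (assuming Con(ZFC)), so $\psi$ is not a uniqueness statement and $L[a]$ is not canonical in the sense of the paper. Indeed this failure is forced by the paper itself: by Theorem \ref{gmacn} the Ground Axiom is canonically necessary, so no nontrivial \emph{set}-forcing extension of $L$ can ever be a canonical model; the mutual-generics argument in that proof (find $p\Vdash\psi$, then two mutually generic filters through $p$) is precisely the obstruction your construction runs into. What your Jensen-forcing model does satisfy is only the paper's later, weaker notion (weak canonicity, where ZFC$+\phi$ proves uniqueness), not the canonicity asserted in the Proposition.

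The paper avoids this by forcing the $\Pi^1_2$-singleton with Friedman's \emph{class} forcing from chapter 6 of \cite{Fr} (in the spirit of Jensen coding and David's trick), which is designed so that the defining $\Pi^1_2$-formula provably has at most one solution in any model of ZFC --- i.e.\ it yields an \emph{absolute} $\Pi^1_2$-singleton in the sense of the paper's definition --- and, being class forcing, it is not ruled out by the Ground Axiom obstruction. Your Step 2 (Shoenfield absoluteness plus the absoluteness of $L[\,\cdot\,]$ to show that $V=L[x]$ for the unique solution $x$ is a uniqueness statement) is correct as written and matches the paper's Corollary on absolute $\Pi^1_2$-singletons; so the repair is to replace Jensen's 1970 set forcing by Friedman's chapter-6 construction (or any absolute $\Pi^1_2$-singleton with ZFC-provable uniqueness), after which your argument goes through and is essentially the paper's.
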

\begin{proof} (Sketch)
 Force a $\Pi^{1}_{2}$-singleton over $L$ as described in chapter $6$ of \cite{Fr}. The generic extension satisfies that there is a real number $r$
satisfying the $\Pi^{1}_{2}$-statement $\psi$ (which is unique) and $V=L[r]$ and is unique with this property.
\end{proof}

\begin{lemma}
 There is a subtheory $T$ of ZFC+`$0^{\sharp}$ exists' that is consistent with $V=L$ such that there are canonical models $M$ of $T$ with $L\subsetneq M\subseteq L[0^{\sharp}]$. 
\end{lemma}
\begin{proof}
In \cite{Fr}, Theorem 6.23, it is shown that there is a $\Pi^{1}_{2}$-singleton $x$ and a theory $T$ as in the statement of the lemma such that $x\notin L$, $0^{\sharp}\notin L[x]$ and $L[x]$ is canonical with respect to $T$.
\end{proof}

According to the discussion before Theorem 6.23 in \cite{Fr}, it is open whether the $x$ in the preceding proof can be proved to be unique in ZFC rather than ZFC+$0^{\sharp}$ exists.

\begin{question}
Is there a canonical model $M$ with $L\subsetneq M\subsetneq L[0^{\sharp}]$?
\end{question}

Our first observation concerning canonical implication is that it does not coincide with first-order provability, i.e., there are canonically necessary statements that are not provable in ZFC:

\begin{lemma}
 There is some $\in$-formula $\phi$ such that $\phi$ does not hold in all transitive class models of ZFC, but $\phi$ is canonically necessary.
\end{lemma}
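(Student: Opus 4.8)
The plan is to exhibit a single $\in$-formula $\phi$ that fails in some transitive class model of ZFC but holds in every canonical model. The natural candidate is a statement capturing ``$V$ is not too rich'', and the cleanest choice is $\phi := $ ``$0^\sharp$ does not exist'' (or, slightly more robustly, the statement ``for every real $x$, $x^\sharp$ does not exist'', equivalently ``there is no nontrivial elementary embedding of any $L[x]$ into itself''). On the one hand $\phi$ visibly fails in some transitive class model of ZFC: working in the metatheory where we have assumed enough large cardinals to furnish a supply of inner models (as stipulated in Section 2), $0^\sharp$ exists in $V$, so $V$ itself — or $L[0^\sharp]$ — is a transitive class model of ZFC in which $\phi$ is false. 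Thus the first half of the lemma holds for free.

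The heart of the matter is to show $\phi$ is canonically necessary, i.e. that no canonical model can contain $0^\sharp$ — or more precisely that every canonical model satisfies ``$0^\sharp$ does not exist''. Suppose $M$ is a canonical model, witnessed by a $T$-canonical uniqueness statement $\psi$, and suppose toward a contradiction that $M \models$ ``$0^\sharp$ exists''. The key structural fact I would invoke is that if $0^\sharp$ exists then one can build, inside $M$, a nontrivial automorphism-like redundancy that contradicts uniqueness: concretely, the existence of $0^\sharp$ (a single real) gives a proper class of Silver indiscernibles, and from these one produces elementary embeddings $j : M \to M$, or alternatively one shows that $M$ has a proper inner submodel $M'$ (say $L[0^\sharp]$ as computed in $M$, or even just $L$ if one is careful) that also models $\psi$. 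The uniqueness scheme $U_\psi$, provable in ZFC and hence holding in $M$, would then force $M = M'$, which is absurd if $M'$ is a proper subclass. So the argument reduces to: \emph{from ``$0^\sharp$ exists'' inside a transitive class model of ZFC, produce two distinct definable-with-parameters transitive-class-model witnesses of the same $\psi$}, contradicting $U_\psi$.

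The main obstacle is making the ``two distinct witnesses'' argument actually work against an \emph{arbitrary} uniqueness statement $\psi$, rather than a hand-picked one: we do not get to choose $\psi$, so we cannot simply say ``$L$ and $L[0^\sharp]$ both model $V=L[0^\sharp]$''. What saves us is that the indiscernibles coming from $0^\sharp$ let us move the whole model $M$ by a nontrivial elementary self-embedding $j$: if $j : M \prec M$ is nontrivial with critical point $\kappa$, then for the formula $\psi$ defining $M$ we would have, by elementarity, that $j$ maps the $\psi$-definition to itself in a way that is nonetheless not the identity on the relevant parameters, contradicting the clause $\forall x (\psi_i(x,y) \leftrightarrow \psi_j(x,y'))$ of $U_\psi$ applied with suitably chosen parameters $y, y'$. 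I would also need to check the bookkeeping in the definition of $T$-canonical: the Gödel-operation / almost-universal characterization of Lemma \ref{ZFmodelchar} must be verified for the submodel or for the image of $j$, but this is routine since both are honest transitive class models of ZFC. I expect the delicate point to be precisely this interface between the automorphism/submodel argument and the scheme-level formulation of uniqueness, and that is where the proof will spend most of its effort.
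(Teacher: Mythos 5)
Your candidate $\phi$ does not work: ``$0^\sharp$ does not exist'' is not canonically necessary, and in fact it is refuted by the paper's own examples. Under exactly the large cardinal assumption you invoke for the first half (that $0^\sharp$ exists in $V$), the statement $V=L[0^\sharp]$ is a uniqueness statement and $L[0^\sharp]$ is a canonical model; since ``$x=0^\sharp$'' is $\Pi^1_2$ and hence absolute to transitive class models containing the real $0^\sharp$, we have $L[0^\sharp]\models$ ``$0^\sharp$ exists''. So your $\phi$ fails in a canonical model and cannot be canonically necessary. The more robust variant fares no better: Theorem \ref{sharp closure} shows that ``for every real $x$, $x^\sharp$ exists'' is canonically possible (via the model $\hat{L}$), so its negation is likewise not canonically necessary. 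The moral of the paper is precisely that sharps are compatible with canonicity while measurables are not, so any attempt to use anti-large-cardinal statements at the level of $0^\sharp$ is doomed.

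The mechanism you propose also breaks down at the step you yourself flag as delicate. The existence of $0^\sharp$ inside $M$ yields nontrivial elementary embeddings $j:L\to L$, not $j:M\to M$; to move $M$ itself you would need an ultrafilter (or extender) \emph{in} $M$, which is exactly the situation exploited in Theorem \ref{nomeasurables} via $\mathrm{Ult}(M,U)$, and is strictly stronger than sharps for reals. Even granting a nontrivial $j:M\prec M$, its range is not transitive and its transitive collapse is $M$ again, so it does not produce two distinct transitive class models of $\psi$ and hence no violation of the scheme $U_\psi$; and there is of course no reason an arbitrary uniqueness statement $\psi$ should hold in $L$ or in $L[0^\sharp]^M$. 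The paper's proof avoids all of this by choosing a forcing-flavored statement, namely ``I am not a Cohen-generic extension of $L$'': if a canonical $M=L[G]$ violated it, some condition $p$ forces the uniqueness statement $\psi$, and a second mutually Cohen-generic filter $G'$ containing $p$ gives $L[G']\models\psi$ with $L[G']\neq L[G]$, contradicting uniqueness. If you want to salvage your approach, you must replace your $\phi$ by a statement whose failure in a canonical model hands you a way to produce a genuinely different transitive class model of the \emph{same arbitrary} uniqueness statement, which is what mutual generics (or, for measurables, ultrapowers) provide.
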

\begin{proof}
 Let $\phi$ be the statement: `It is not the case that there is a Cohen-generic filter $G$ over $L$ such that $V=L[G]$'. (Thus, intuitively, $\phi$ says: `I am not a Cohen-extension of $L$').
This is an $\in$-statement. Clearly, $\phi$ is false in a Cohen-extension $L[G]$ of $L$.

On the other hand, let $M$ be canonical and assume that $M\models\phi$. Let $\psi$ be a uniqueness statement for $M$. Then there is some $G$ Cohen-generic over $L$ with $M=L[G]$. Moreover,
as $M\models\psi$, there is some condition $p$ such that $p\Vdash\psi$. Let $G^{\prime}$ be Cohen-generic over $L$ relative to $G$ such that $p\in G^{\prime}$.
Then $L[G^{\prime}]\models\psi$ but $L[G^{\prime}]\neq L[G]$, a contradiction to the assumption that $\psi$ is a uniqueness statement.
\end{proof}

\begin{remark}
For any $\in$-sentence $\phi(\alpha)$ with ordinal parameters, the homogenity of Cohen forcing implies that, if $\phi(\alpha)$ holds in the generic extension, then $\phi(\check{\alpha})$ is forced by $\mymathbb{1}$. It follows that the sentence $\phi$ just constructed is also $\mathcal{C}$-canonical. 
\end{remark}

This example can be considerably strengthened: In fact, no set forcing extension can be canonical. It is not obvious that the statement `I am not a set forcing extension' is expressable in the first-order
language of set theory at all, but by Reitz \cite{R}, where it is introduced under the name `ground model axiom' or `ground axiom', it turns out to be so.

\begin{defini}
[See \cite{R}] The Ground Model Axiom (GMA) is the statement that there is no transitive class model $M$ of ZFC such that, for some forcing $\mathbb{P}\in M$ and some $\mathbb{P}$-generic filter $G$ over $M$,
we have $V=M[G]$. It is proved in \cite{R} that GMA is expressible as an $\in$-formula.
\end{defini}

\begin{thm}{\label{gmacn}}
 The ground model axiom GMA is canonically necessary.
\end{thm}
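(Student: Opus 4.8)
The plan is to generalise the Cohen-extension argument preceding the theorem to an arbitrary ground model. Suppose towards a contradiction that $M$ is a canonical model with $M\models\neg\mathrm{GMA}$, and, since $M$ is canonical, fix a uniqueness statement $\psi$ with $M\models\psi$, so that every $U^{\psi}_{ij}$ is a theorem of ZFC. Unravelling $\neg\mathrm{GMA}$ inside $M$, we obtain a transitive class model $W\subsetneq M$ of ZFC, a forcing $\mathbb{P}\in W$, and a $\mathbb{P}$-generic filter $G$ over $W$ with $M=W[G]$. Since $W[G]\models\psi$, the forcing theorem applied inside $W$ yields a condition $p\in G$ with $(p\Vdash_{\mathbb{P}}\psi)^{W}$; hence $W[H]\models\psi$ for every $\mathbb{P}$-generic filter $H$ over $W$ containing $p$.

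Next I would shrink $p$ so that $\mathbb{P}\upharpoonright p$ is atomless. If the atoms of $\mathbb{P}$ below $p$ were dense below $p$, genericity would put an atom $q\in G$; but then every condition below $q$ lies in $G$, so $G$ is recoverable inside $W$ from $\mathbb{P}\upharpoonright q\in W$, whence $W[G]=W$, contradicting $W\subsetneq M$. So some $p'\le p$ has no atom below it, and since forcing persists downwards we may replace $p$ by $p'$ and assume $\mathbb{P}\upharpoonright p$ is atomless. Now force over $V$ with $\mathbb{P}\upharpoonright p$ to obtain a filter $G'$ generic over $V$, hence over $M$ and over $W$, with $p\in G'$. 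By the first paragraph $W[G']\models\psi$; and since $\mathbb{P}\upharpoonright p$ is atomless, a standard density argument shows that no $\mathbb{P}\upharpoonright p$-generic over $M$ lies in $M$, so $G'\notin M$ and therefore $W[G']\neq M$ (a generic filter always lies in its own extension). By the product lemma, $G$ is $\mathbb{P}$-generic over $W[G']$, so $N:=M[G']=W[G][G']=W[G'][G]$ is a set-forcing extension of both $M=W[G]$ and $W[G']$.

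It remains to contradict the uniqueness of $\psi$. The model $N$ is a set-forcing extension of $M$, hence a model of ZFC; by the ground-model definability theorem of Laver and Woodin (cf. \cite{R}), its two grounds $M$ and $W[G']$ are definable classes of $N$ with set parameters. Each of them models ZFC and contains all ordinals, and by Lemma \ref{ZFmodelchar} (applied in $N$) is closed under G\"odel operations and almost universal in $N$; internally it satisfies AC and $\psi$. Hence there are indices $i,j$ and parameters $y,y'\in N$ with $\{x:\psi_i(x,y)\}^{N}=M$ and $\{x:\psi_j(x,y')\}^{N}=W[G']$ for which $N\models\mathrm{TM}_i(\psi,y)\wedge\mathrm{TM}_j(\psi,y')$. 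Since $U^{\psi}_{ij}$ is provable in ZFC it holds in $N$, so $N\models\forall x\,(\psi_i(x,y)\leftrightarrow\psi_j(x,y'))$, i.e.\ $M=W[G']$ in $N$ --- contradicting $W[G']\neq M$. The step I expect to be most delicate is this last one: one must ensure simultaneously that the two candidate models are genuinely different (which is why $\mathbb{P}\upharpoonright p$ is forced to be atomless) and that both are available, with set parameters, as definable classes of a single model of ZFC (which is exactly what ground-model definability together with the product decomposition $N=W[G'][G]$ delivers).
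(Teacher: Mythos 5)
Your proof is correct and follows the same basic strategy as the paper's --- contradict the ZFC-provable uniqueness scheme for $\psi$ by exhibiting, in a forcing extension, two distinct transitive class models of ZFC$+\psi$ --- but the execution is genuinely different. The paper passes to a Levy collapse extension $M[H]$ making $\mathfrak{P}^{M}(\mathbb{P})$ countable and picks two mutually $\mathbb{P}$-generic filters $G_{1},G_{2}$ containing $p$, comparing $N[G_{1}]$ with $N[G_{2}]$; you instead add a single generic $G'$ for $\mathbb{P}\upharpoonright p$ and compare $M=W[G]$ with $W[G']$ inside the common extension $N=W[G][G']=W[G'][G]$, invoking the Laver--Woodin ground-definability theorem to instantiate the uniqueness scheme $U^{\psi}_{ij}$ in $N$. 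Your route patches two details the paper elides: the trivial/atomic case (if the forcing below $p$ had an atom in the generic, the extension would be trivial and, in the paper's version, the two ``mutually generic'' filters through $p$ could not be taken distinct), and the fact that the two candidate models must actually appear as definable classes, with set parameters, of the ambient ZFC model before the provable scheme $U^{\psi}_{ij}$ can be applied to them. The price is reliance on ground definability, which the paper's comparison of $N[G_1]$ with $N[G_2]$ also tacitly needs but does not name.

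One caveat you should make explicit, exactly as the paper does parenthetically for its collapse generic $H$: a filter $G'$ generic over $V$ (or even over $M$) for an atomless forcing need not exist. The argument should therefore be read as showing that ZFC cannot prove all the statements $U^{\psi}_{ij}$ --- e.g.\ by running your construction with the forcing relation or a Boolean-valued model over $M$, where $p$ forces that the ground $W[\dot G']$ and the inner model $M$ are distinct definable transitive class models of ZFC$+\psi$, so that some $U^{\psi}_{ij}$ is forced to fail. With that rephrasing (or the paper's own disclaimer) your proof is complete.
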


\begin{proof}
Assume that $M$ is canonical, witnessed by $\phi$, and $M$ does not satisfy the ground axiom, e.g. $M=N[G]$, where $N$ is an inner model of $M$ and
$G$ is a generic filter for a forcing $\mathbb{P}\in N$. As $\phi$ holds in $M$, there is some $p\in\mathbb{P}$ such that $p\Vdash\phi$ over $N$.

We pass from $M$ to a generic extension $M[H]$ in which $\mathfrak{P}^{M}(\mathbb{P})$ is countable (via the appropriate Levy collapse). 
By Rasiowa-Sikorski, we find in $M[H]$ two mutually $\mathbb{P}$-generic filters over $N$ containing $p$, namely $G_{1},G_{2}$. 
Hence $N[G_{1}]\models\phi$ and $N[G_{2}]\models\phi$, but $N[G_{1}]\neq N[G_{2}]$, as e.g. $G_{1}\in N[G_{1}]\setminus N[G_{2}]$, so $M=N[G]$ cannot be unique with this property, a contradiction.

\end{proof}

Although the preceding argument generalizes to show that forcing extensions by homogenous, as well as countably closed, forcings cannot be $\mathcal{C}$-canonical, we do not know whether GMA is $\mathcal{C}^{L}$-canonical. It is, however, not $\mathcal{C}$-canonical:

\begin{thm}{\label{gma not c canonical}}
GMA is not $\mathcal{C}$-canonically necessary.
\end{thm}
\begin{proof}
(Sketch) We use an iterated forcing of length $\omega$. (A similar construction is described in Reitz \cite{Reitz}, proof of Theorem 26.) The first forcing $\mathbb{P}_{0}$ adds a Cohen-real, and then, for $i>1$, the $i$th forcing encodes the generic objects of the $(i-1)$th forcing in the continuum function. In the extension by this forcing, the generic filter can then be read off from the continuum function. Let $\mathbb{P}$ denote this forcing, and, for some $\mathbb{P}$-generic extension $L[G]$ of $L$, let $T$ be the theory that tells us sufficiently many values of the continuum function in $L[G]$ to reconstruct $G$. Then $L[G]$ is the unique transitive class model of $T$+`I am a $\mathbb{P}$-generic extension of $L$'. Hence $L[G]$ is $\mathcal{C}$-canonical, but $L[G]\not\models$GMA. 
\end{proof}

\begin{question}
Is GMA $\mathcal{C}^{L}$-canonically necessary?
\end{question}

By a similar argument, now using the forcing theorem for symmetric extensions (see Hayut and Karagila, \cite{HK}, p. 453; also see Karagila \cite{Karagila}), we obtain:

\begin{corollary}{\label{ground model axiom for symmetric models}}
No symmetric extension of a ZFC-model is canonical. Thus, the statement that $V$ is not a generic extension of a ZFC-model is canonically necessary. 
\end{corollary}

We can further exploit this argument in a different direction, yielding an upper bound on the elements of a canonical model. To this end, recall, e.g. from Fuchs, Hamkins and Reitz \cite{FHR} that, in set-theoretical geology, the mantle is the intersection of all transitive class models $M$ of ZFC such that $V$ is a generic extension of $M$. 

\begin{prop}{\label{canonical mantle}}
If $M$ is canonical, then $M$ is a subclass of the mantle.
\end{prop}
\begin{proof}
Suppose that $M$ is canonical with uniqueness statement $\phi$ and $N$ is a ground for $V$. Then $V=N[G]$ for some $\PP$-generic filter $G$ over $N$ and some forcing $\PP\in N$. Moreover, for some formula $\psi(x,y)$ and some parameter $y$, there is a condition $p\in G$ which forces over $N$ that $\varphi$ holds in the inner model that is defined by a  $\psi(x,y)$. 
Let $H$ be $\PP$-generic over $V$ with $p\in H$. Then $G$ and $H$ are mutually generic. 
Since $p\in H$, there is an inner model $M^{\prime}$ of $N[H]$ in which $\varphi$ holds. 
By uniqueness of inner models of $\varphi$ in $N[G\times H]$, we have $M=M^{\prime}$. Since $G$ and $H$ are mutually generic over $N$, we have $N[G]\cap N[H]=N$ and hence $M\subseteq N$, as required. 

\end{proof}

Given Theorem \ref{gmacn}, one might wonder whether GMA captures the full strength of canonical necessity, i.e. whether there are canonically necessary statements that do not follow from GMA. This also turns out to be true:

\begin{thm}{\label{nomeasurables}}
\begin{enumerate}
\item The statement that `There is no measurable cardinal' is canonically necessary. 
\item The statement that `There are only finitely many measurable cardinals' is $\mathcal{C}^{1}$-canonically necessary. 
\end{enumerate}
\end{thm}
\begin{proof}
We start with the second statement. 
Suppose for a contradiction that $M$ is 
a canonical model with uniqueness statement $\phi(\alpha)$ ($\alpha\in\text{On}$) and that $M$ contains infinitely many measurable cardinals $(\kappa_{i}:i\in\omega)$. By a theorem of Kunen (see, e.g., Theorem 19.17 of \cite{Kanamori}), there is an ultrapower $\text{Ult}(M,U)$ by a normal  ultrafilter $U$ on some $\kappa_{i}$ that fixes $\alpha$. 
But then, $\text{Ult}(M,U)$ is a transitive class inner model of ZFC+$\phi(a\alpha)$ different from $M$, contradicting the assumption that $M$ was unique with these properties. 

The first statement can be proved in the same way, by just taking the ultrapower with any normal ultrafilter on a measurable cardinal in $M$ (since there is no $\alpha$ to preserve). 

\end{proof}

\begin{thm}
Suppose that there is a measurable cardinal in $V$. Then every canonical model has a 
proper class of 
order indiscernibles.
\end{thm}
\begin{proof} 
Suppose that $M$ is a canonical model with uniqueness statement $\phi$, and suppose that $M$ is defined in $V$ as $\{x:\psi(x,\vec{p}\}$, for some $\vec{p}\in V$. Let $M_{0}:=M$. 
Let $U$ be a normal ultrafilter on $\kappa$, and let $\pi:V\rightarrow\text{Ult}(V,U)$ be the ultrapower embedding. Define $M_1$ as $\{x:\psi(x,\pi(\vec{p})\}^{\text{Ult}(V,U)}$. 
Since $\pi$ is elementary, so is $(\pi\upharpoonright M_0):M_{0}\rightarrow M_{1}$. 
Thus $M_1$ is a transitive class model of ZFC+$\phi$, and thus, $M_{1}=M$. 

Let $\kappa_1=\pi(\kappa)$, so $\kappa_{1}>\kappa$. Iterating this procedure, and taking direct limits at limit ordinals, we obtain a sequence $(M_{\iota}:\iota\in\text{On})$ of models, all of which are equal to $M$, on the one hand and a strictly increasing sequence $(\kappa_{\iota}:\iota\in\text{On})$ of cardinals in $M$ on the other. But now, it is standard to show that $(\kappa_{\iota}:\iota\in\text{On})$ is a sequence of order indiscernibles for $M$.
\end{proof}

\begin{corollary}
 There are canonically necessary statements that do not follow from GMA.
\end{corollary}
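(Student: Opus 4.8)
The plan is to combine the two preceding theorems. By Theorem \ref{gmacn}, the Ground Model Axiom is canonically necessary, so it holds in every canonical model; in particular, every canonical model satisfies GMA. By Theorem \ref{nomeasurables}, the statement ``there is no measurable cardinal'' is also canonically necessary, hence likewise holds in every canonical model. So the task reduces to exhibiting a single transitive class model of ZFC that satisfies GMA but has a measurable cardinal: if GMA implied ``there is no measurable cardinal'' as a first-order consequence, then (being a consequence of a true-in-all-canonical-models statement) it would be derivable, and no such model could exist.

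First I would invoke the large cardinal hypothesis available in the metatheory to get an inner model $M$ of ZFC with a measurable cardinal — for instance, assume $0^{\sharp}$ exists (or outright a measurable in $V$) and take $M = L[U]$, the canonical inner model for one measurable, which is a transitive proper class model of ZFC in which some $\kappa$ is measurable. The key point is then that $L[U] \models \text{GMA}$: $L[U]$ is not a set-forcing extension of any of its inner models. This is a known fact about $L[U]$ (it is a ``fine-structural'' or ``minimal'' model in the relevant sense, and Reitz shows such canonical inner models satisfy the ground axiom); I would cite \cite{R} for the statement that $L[U]$ satisfies GMA. Thus $M = L[U]$ witnesses $\text{ZFC} + \text{GMA} + $ ``there is a measurable cardinal''.

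Having produced such an $M$, the conclusion is immediate: the statement ``there is no measurable cardinal'' is canonically necessary (Theorem \ref{nomeasurables}) but fails in $M$, which is a model of $\text{ZFC} + \text{GMA}$; therefore ``there is no measurable cardinal'' does not follow from GMA, and consequently there is a canonically necessary statement (namely this one) that does not follow from GMA.

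The main obstacle is the verification that $L[U]$ — or whichever concrete model one picks — actually satisfies the Ground Model Axiom, since this is the only nontrivial input; everything else is a soft combination of the two previous theorems. One must be slightly careful that the chosen model is genuinely not a nontrivial set-forcing extension of an inner model, which is exactly the content of the cited result of \cite{R} on ground axiom models among the canonical inner models. An alternative, if one wishes to avoid relying on fine-structure facts, is to first force GMA over $L[U]$ by a class forcing that preserves the measurable cardinal (as in \cite{R}, GMA is forceable while preserving large cardinals), obtaining a model of $\text{ZFC} + \text{GMA} + $ ``$\kappa$ is measurable'' directly; but the existence of the canonical model $L[U]$ with its ground-axiom property is the cleanest route.
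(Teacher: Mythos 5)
Your proposal is correct and follows essentially the same route as the paper: the paper likewise cites Reitz's result that the fine-structural model for a measurable cardinal (i.e.\ $L[U]$) satisfies GMA, and combines this with Theorem \ref{nomeasurables} to conclude that the canonically necessary statement ``there is no measurable cardinal'' does not follow from GMA.
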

\begin{proof}
 By results of J. Reitz (see \cite{R}), the fine structural models for measurable cardinals satisfy GMA. Hence, the nonexistence of measurable cardinals does not follow from GMA.
\end{proof}

The canonical impossibility of measurable cardinals suggests further considerations about the canonical possiblity, or otherwise,
of large cardinals. First, a positive observation:

\begin{thm}{\label{sharp closure}}
(i) Let $\phi^{\sharp}$ be the statement `For every $x\subseteq\omega$, $x^{\sharp}$ exists'.
If $\phi^{\sharp}$ holds in $V$, then $\phi^{\sharp}$ is canonically possible.

(ii) If $\phi^{\dagger}$ is the statement that $x^{\dagger}$ exists for  every $x\subseteq\omega$,
and $\phi^{\dagger}$ holds in $V$, then $\phi^{\dagger}$ is canonically possible.
\end{thm}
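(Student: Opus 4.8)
\textbf{Proof proposal for Theorem \ref{sharp closure}.}

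The plan is to build, under the hypothesis that $\phi^\sharp$ holds in $V$, a canonical model in which it continues to hold, using the same constructions that already produce canonical models. The natural candidate is a closure of $L$ under the sharp operation: iterate $x \mapsto x^\sharp$ along the ordinals (or, more precisely, take $L[\langle x^\sharp : x \in \mathcal{P}(\omega)\rangle]$ and repeat, closing off transfinitely) and argue that the resulting inner model $M$ is uniquely characterized by an $\in$-formula $\phi$, hence $T$-canonical for $T = \mathrm{ZFC} + \phi^\sharp$. First I would fix the candidate model precisely: let $M$ be the least inner model closed under the map $x \mapsto x^\sharp$ on reals (equivalently, the union of the constructible closures obtained by alternately adjoining all sharps of reals currently present and iterating $\omega_1$-many — actually $\mathrm{Ord}$-many — times). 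I would then record that $M \models \mathrm{ZFC}$ (via Lemma \ref{ZFmodelchar}, since each stage is closed under Gödel operations and the whole is almost universal) and that $M \models \phi^\sharp$, because any real $x \in M$ entered at some stage and its sharp was adjoined at a later stage.

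The second and main step is to write down a uniqueness statement $\phi$ witnessing canonicity, i.e. to show $M$ is the unique transitive class model of $\mathrm{ZFC} + \phi$. Here $\phi$ should say something like: ``$V$ is the closure of $L$ under the sharp operation on reals'' — more carefully, ``every set is constructible from a transfinite sequence of iterated sharps of reals, and every real has its sharp.'' The uniqueness argument then runs: if $N$ is any transitive class model of $\mathrm{ZFC} + \phi$, then $N$ contains $L$, and by $\Pi^1_2$-absoluteness (Shoenfield) the statement ``$y = x^\sharp$'' is absolute between $N$ and $V$ for reals $x, y \in N$; moreover the sharp of a real, when it exists in $V$, is genuinely the sharp (it is the unique real coding a well-founded remarkable EM-blueprint, and this is $\Pi^1_2$, so absolute). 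Thus the iteration computed inside $N$ coincides level-by-level with the iteration computed in $V$, forcing $N = M$. Part (ii) is entirely parallel, replacing $x^\sharp$ by $x^\dagger$ (the dagger, i.e. the sharp for the minimal inner model with a measurable over $x$): one needs that ``$y = x^\dagger$'' is again sufficiently absolute — it is $\Pi^1_3$ but still provably absolute between inner models containing the relevant mice under the background hypothesis — so that the closure is uniquely determined.

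The hard part will be the absoluteness of the sharp (resp. dagger) operation needed for uniqueness: I must ensure that in any competing model $N \models \mathrm{ZFC} + \phi$, the object $N$ believes to be $x^\sharp$ really is $x^\sharp$ as computed in $V$, so that the closures cannot diverge. For sharps this is clean because ``$r$ is the theory of a well-founded, remarkable, unbounded set of indiscernibles for $L[x]$'' is $\Pi^1_2(x,r)$ and hence Shoenfield-absolute downward and upward between transitive models containing $x$ and $r$; the only subtlety is verifying $N$ cannot be ``missing'' a sharp it should have, which is handled because $\phi$ explicitly asserts closure under the operation and $N$ is a class model so it sees all reals of $V$ that lie in it — but one must be careful that $N$ might contain reals not in $M$, which is ruled out by the constructibility clause of $\phi$ together with the absoluteness, giving $\mathcal{P}(\omega)^N \subseteq \mathcal{P}(\omega)^M$ and hence, inductively up the iteration, $N \subseteq M$ and then $M \subseteq N$. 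For the dagger the same strategy applies but the requisite absoluteness sits one level up in the projective hierarchy and leans on the background large-cardinal hypothesis ($\phi^\dagger$ in $V$) to guarantee the mice involved are fully iterable and their first-order theories absolutely computed; making this precise, rather than the soft outline, is where the real work lies.
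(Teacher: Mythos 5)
Your proposal is essentially the paper's own argument: the paper builds a model $\hat{L}$ by relativizing the $L$-hierarchy to definability in the language with a sharp function symbol, which is just your least inner model closed under $x\mapsto x^{\sharp}$, and it likewise rests definability and uniqueness on $\Pi^{1}_{2}$-absoluteness of the sharp operation, with your uniqueness discussion merely spelling out what the paper leaves implicit. One small correction for (ii): ``$y=x^{\dagger}$'' (iterability of a countable linear premouse over $x$) is still $\Pi^{1}_{2}$, as the paper notes, so Shoenfield absoluteness suffices and no appeal to $\Pi^{1}_{3}$-level absoluteness under the background hypothesis is needed.
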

\begin{proof}
(i) Let us add the unary function symbol $\sharp$ to the language of set theory, with the obvious intended meaning.
For a set $X$, let $\text{Def}^{\sharp}(X)$ be the set of
subsets of $X$ that are definable over $X$ using the language of set theory extended by $\sharp$. 
Now define a class $\hat{L}\subseteq V$ in analogy with $L$ as follows: 

\begin{itemize}
 \item $\hat{L}_{0}=\emptyset$
 \item $\hat{L}_{\alpha+1}=\text{Def}^{\sharp}(\hat{L}_{\alpha})$
 \item $\hat{L}_{\lambda}=\bigcup_{\iota<\lambda}\hat{L}_{\iota}$ for $\iota<\lambda$
 \item Finally, $\hat{L}:=\bigcup_{\iota\in\text{On}}\hat{L}_{\iota}$.
\end{itemize}

By $\Pi^{1}_{2}$-absoluteness of sharps, the sharp function and thus $\hat{L}$ is a definable subclass of $V$ via the recursion
just stated.
It is clear from the definition that $\hat{L}\models\phi^{\sharp}$. That $\hat{L}\models\text{ZFC}$ can be checked similarly
to the fact that ZFC holds in $L$. 
Thus $\hat{L}$ is a canonical model in which $\phi^{\sharp}$ holds.

(ii) is now proved similarly, as the $\dagger$-operation is still $\Pi^{1}_{2}$.
\end{proof}

We can do the same for other inner model operators whose iterability is $\Pi^1_2$. 

\subsection{Canonicity and the continuum function}

All examples of canonical models that we have constructed so far satisfy the continuum hypothesis, and even the generalized continuum hypothesis. 
An attractive question is then whether ZFC$\models_{c}$CH, or even whether ZFC$\models_{c}$GCH. 
We conjecture that this is false (see the last section). 
Here, we mention some related results. We denote by $c$ the continuum (class) function that maps each ordinal $\alpha$ to $2^{\text{card}(\alpha)}$.

\begin{thm}{\label{nonCHweaklynecessary}}
$\neg CH$ is weakly c.p.
\end{thm}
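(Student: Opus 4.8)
The plan is to build a weakly canonical model of $\neg CH$ by iterating Friedman's forcing for adjoining a $\Pi^{1}_{2}$-singleton (chapter $6$ of \cite{Fr}, already used above) $\omega_{2}$ many times. Working over $L$ (assuming, as throughout, enough large cardinals in the metatheory that $L$ is a proper inner model), perform a cardinal-preserving iteration $\langle\mathbb{P}_{\alpha}:\alpha\le\omega_{2}\rangle$ with suitable support in which $\mathbb{P}_{\alpha+1}/\mathbb{P}_{\alpha}$ is a version of Friedman's forcing adding a real $r_{\alpha}$ that, in the extension, is the unique real satisfying a fixed $\Pi^{1}_{2}$-formula $\theta$ whose parameter codes $\alpha$ together with the earlier coordinates of the generic sequence. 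Let $\vec r:=\langle r_{\alpha}:\alpha<\omega_{2}\rangle$ and $\bar M:=L[\vec r]$. Since the iteration preserves $\aleph_{1}$ and $\aleph_{2}$ and adds $\aleph_{2}$ many distinct reals, $\bar M\models 2^{\aleph_{0}}\ge\aleph_{2}$, i.e. $\bar M\models\neg CH$.

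Next one isolates the sentence $\phi$. Because each $r_{\alpha}$ is characterized by a $\Pi^{1}_{2}$-formula whose parameters involve only ordinals $\le\alpha$ and the earlier coordinates of $\vec r$, the whole sequence $\vec r$ is definable without parameters over $\bar M$ by the displayed recursion; so let $\phi$ assert ``$V=L[\vec r]$'', where $\vec r$ is this parameter-free definable sequence of iterated $\Pi^{1}_{2}$-singletons of length $\omega_{2}$, together with the statement that the iteration was correctly carried out to that length (so that in particular $\aleph_{1},\aleph_{2}$ are as intended and no proper initial segment of the construction already exhausts the universe). By construction $\bar M\models\text{ZFC}+\phi$, so in $\text{ZFC}+\phi$ the class $\{x:x=x\}$ witnesses $\exists y\,\text{TM}_{i}(\phi,y)$. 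For the uniqueness clause one argues inside an arbitrary $W\models\text{ZFC}+\phi$: if $N\subseteq W$ is any transitive class model of $\text{ZFC}+\phi$, then $N\models\text{``}V=L[\vec r^{N}]\text{''}$ with $\vec r^{N}$ given by the same recursion, and an induction on $\alpha$ using Shoenfield absoluteness between $N$ and $W$ applied to the $\Pi^{1}_{2}$-singleton characterization of $r_{\alpha}$ (together with the fact that $W\models\phi$ forces uniqueness of the relevant real in $W$) shows $\vec r^{N}=\vec r^{W}$; hence $N=L[\vec r^{W}]=W$. This yields all instances $U^{\phi}_{ij}$ in $\text{ZFC}+\phi$, so $\phi$ is weakly canonical, and $\bar M$ is a weakly canonical model of $\neg CH$.

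I expect the main obstacle to be the bookkeeping that keeps the $\Pi^{1}_{2}$-singleton property uniform and self-referential along the full $\omega_{2}$-iteration: for countable $\alpha$ the parameter $\vec r\restriction\alpha$ is coded by a real, but for uncountable $\alpha$ one must instead arrange that $r_{\alpha}$ is reconstructible from the already-coded part of the universe by a formula of bounded complexity mentioning only $\alpha$, so that the final sequence is genuinely parameter-free definable and survives Shoenfield absoluteness. Verifying that Friedman's construction threads through such an iteration while preserving $\aleph_{1}$ and $\aleph_{2}$ --- equivalently, that ``$r_{\alpha}$ is the unique real with $\theta(\alpha,\vec r\restriction\alpha,\cdot)$'' persists in all of $\bar M$, not merely in $L[\vec r\restriction(\alpha+1)]$ --- is the technical heart of the argument, and is presumably also why the full (non-weak) version is deferred to later work.
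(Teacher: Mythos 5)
Your proposal is not the paper's argument, and as it stands it has a genuine gap: the entire weight of the proof rests on the $\omega_{2}$-length iteration of Friedman's forcing having the properties you list, and precisely this is not established --- indeed the paper itself only \emph{conjectures} that this iteration works and explicitly defers it to subsequent work (it is their proposed route to the stronger statement that $\neg$CH is canonically possible, not the proof of the weak statement). Concretely, the step that fails as written is the uniqueness argument: a $\Pi^{1}_{2}$ formula takes only real parameters, so for $\alpha\geq\omega_{1}$ there is no ``$\Pi^{1}_{2}$-singleton characterization of $r_{\alpha}$ with parameter $\vec r\restriction\alpha$'' to which Shoenfield absoluteness could be applied; your induction on $\alpha$ between $N$ and $W$ therefore breaks down exactly where the continuum is pushed past $\aleph_{1}$. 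You flag this yourself as ``the technical heart,'' but flagging it does not supply it: one must actually arrange (and prove, in ZFC+$\phi$) that each $r_{\alpha}$ remains uniquely reconstructible by a fixed formula in the \emph{final} model and in every transitive class model of ZFC+$\phi$, that the $\Pi^{1}_{2}$-singleton property of the early reals persists through the later stages, and that the iteration preserves $\aleph_{1}$ and $\aleph_{2}$. None of this is routine, and without it neither existence nor uniqueness of the model of ZFC+$\phi$ is available, even in the weak sense where ZFC+$\phi$ itself is the proving theory.

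The paper's actual proof of the weak statement is much lighter and avoids iterations entirely: by Theorem 19 of Groszek (\cite{G}) there is a set forcing $\mathbb{P}$, definable over the extension without parameters, such that the $\mathbb{P}$-extension $M$ of $L$ satisfies $\neg$CH while \emph{every} intermediate transitive class model $N$ with $L\subseteq N\subseteq M$ satisfies CH, provably in ZFC. Taking $\phi$ to be ``I am a $\mathbb{P}$-extension of $L$,'' ZFC+$\phi$ proves that the universe is such an extension and that no proper inner model can satisfy $\phi$ (since $\phi$ implies $\neg$CH and all proper intermediate models satisfy CH), which gives exactly the existence and uniqueness clauses required for weak canonicity. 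If you want to salvage your approach, you should either carry out the full iteration argument (which is a substantial project, not a paragraph) or switch to a minimality-of-the-extension argument of Groszek's type, where uniqueness comes from the absence of intermediate models of $\neg$CH rather than from absoluteness of singleton definitions.
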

\begin{proof}
In Theorem 19 of \cite{G}, a set forcing extension $M$ of $L$ is constructed such that $M\models\neg\text{CH}$, but
CH holds in every transitive class $N$ such that $L\subseteq N\subseteq M$ (and this is provable in ZFC). The forcing $\mathbb{P}$ used is definable over $M$ without parameters.

Consider the $\in$-statement $\phi$ `I am a $\mathbb{P}$-extension of $L$'. Then every proper inner model of a transitive class model $M$ of ZFC+$\phi$ will satisfy CH, so that
$M$ is the only inner model of $M$ in which CH fails (and all of this is provable in ZFC+$\phi$). Hence $M$ is a weakly canonical model of ZFC+$\neg$CH.
 
\end{proof}

\subsection{How canonical is ZFC?}

Besides asking which statements are canonically necessary over ZFC, one can also step backwards and ask whether the axioms of ZFC canonically follow from weaker subtheories. 

Even forgetting about uniqueness statements, the restriction to transitive class models alone yields certain results. 

The following observation was communicated to us by Philip Welch:

Recall that $\Sigma_2$-KP is Kripke-Platek set theory (see, e.g., \cite{MW}) supplemented with the $\Sigma_2$-collection and the $\Sigma_2$-separation scheme.
Moreover, Pot denotes the power set axiom.

\begin{thm}{\label{sigma2-kp is enough}}
(Welch) All axioms of ZF are implied over transitive classes by $\Sigma_{2}$-KP+Pot (the power set axiom). In particular, $\Sigma_{2}$-KP+ Pot$\models_{c}$ZF.
\end{thm}
\begin{proof}
Let $M$ be an transitive class model $\Sigma_2$-KP+Pot, defined in $V$. 
By Lemma \ref{ZFmodelchar}, it suffices to show that $M$ is closed under G\"odel functions and almost universal. 

Since $M\models$KP, $M$ is clearly closed under G\"odel functions. 

It remains to show that $M$ is almost universal. 
To this end, we will show how to define inside $M$ the cumulative hierarchy $(V_{\alpha}^{M}:\alpha\in\text{On})$ and show that 
$M=\bigcup_{\iota\in\text{On}}V^{M}_{\iota}$.

Once this is done, the argument finishes as follows: If $x\subseteq M$ is a set, we consider the functional class $F:x\rightarrow\text{On}$ (defined in $V$) that maps each $y\in x$ to the smallest $\iota\in\text{On}$ 
such that $y\in V^{M}_{\iota}$. 
By Replacement in $V$, $F[x]$ is a set of ordinals in $V$, and so $\alpha=\text{sup}F[x]$ is an ordinal. Then, by definition of $\alpha$, we have $x\subseteq V^{M}_{\alpha}$, which suffices.

Inside $M$, define a class function $F:\text{On}\times M\rightarrow M$ as follows:
 
For successor ordinals, let $F(\iota+1,x)=z$ if and only if $z=\mathcal{P}(x(\iota))$, i.e. if and only if there is exactly one set $a$ with $(\iota,a)\in x$ and $\forall{y}(y\in z\leftrightarrow y\subseteq x)$; if there is no such set, then $F(\iota+1,x)=\emptyset$. 
Since the first existential (and implicit, via the uniqueness condition, universal) quantifier is bounded, this is $\Pi_1$.

For a limit ordinal $\delta$, $F(\delta,x)=z$ if and only if $z=\bigcup{x}$, i.e., if and only if $\forall{y}(y\in z\leftrightarrow\exists{b\in x}(y\in b)$. Again, this is $\Pi_1$. 

Now the class function $V^{M}:\text{On}\rightarrow M$ defined by applying the recursion principle to $F$ is $\Sigma_2$ over $M$.

We claim that $V^{M}:=\bigcup_{\iota\in\text{On}}V^{M}_{\iota}=M$. 

It is clear that 
$V^{M}\subseteq M$. 

To see the reverse direction, suppose otherwise, and let $x\in M$ be $\in$-minimal (by transitivity of $M$) such that $x\notin V^{M}$. 
Then, by transitivity of $M$, we have $x\subseteq M$, and by minimality of $x$, we have $x\subseteq V^{M}$. Thus (by replacement in $V$), there must be an ordinal $\alpha$ such that $x\subseteq V^{M}_{\alpha}$. But then, we have $x\in V^{M}_{\alpha+1}$.

\end{proof}

\begin{corollary}
Let $M$ be a canonical model of ZF with uniqueness statement $\phi$, where $\phi$ is $\Sigma_m$, and let $n=\text{max}\{3,n\}$. Then $M$ has no proper transitive $\Sigma_{n}$-elementary substructure that contains all ordinals. 
\end{corollary}
\begin{proof}
Suppose that $N\prec_{\Sigma_{n}}M$ and $\text{On}\subseteq N$. Then $N\models\phi$. Moreover, since $n\geq 3$, $N\models\Sigma_2$-KP and $N\models$Pot. By Theorem \ref{sigma2-kp is enough}, $N\models$ZF. Hence $N\models$ZF+$\phi$, and since $\phi$ is a uniqueness statement, we have $N=M$.
\end{proof}

\begin{remark}
By a similar argument, one can see that a canonical model $M$ with a $\Sigma_n$-uniqueness statement coincides with the $\Sigma_n$-Skolem hull of the ordinals in $M$. 
\end{remark}

\begin{question}
Is there a transitive proper class inner model of KP+Pot in which ZF fails? 
Do we have KP$\models_{c}$ZF?
\end{question}

\subsubsection{The axiom of choice}

We do not know whether there is a canonical model of ZF in which the axiom of choice fails. 
One of the most studied theories contradicting choice is ZF+AD, where AD is the axiom of determinacy. 
Whether or not AD is canonically possible is open. 
However, we have the following partial results:

\begin{thm}{\label{L[R] and AD}}
Assume that $A^{\sharp}$ exists for every $A\subseteq\mathbb{R}$.
Then there is no canonical model of T:=ZF+V=$L(\mathbb{R})$+AD.
\end{thm}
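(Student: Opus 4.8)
The plan is to suppose, towards a contradiction, that $M$ is a canonical model of $T$, witnessed by a uniqueness statement $\phi$; thus $M\models T+\phi$ and, provably in ZFC, $T+\phi$ has at most one transitive class model (in the $\mathrm{ZF}$-sense, i.e. dropping the $\mathrm{AC}$-clause, as in the remark after the basic definitions). From this I would derive a \emph{second} such model. Since $M$ is a transitive class with $M\models V=L(\mathbb R)$, we have $M=L(\mathbb R^M)$; put $A:=\mathbb R^M\subseteq\mathbb R$, so $M=L(A)$. As in the proof of Theorem~\ref{gmacn} it suffices to reach the contradiction in a — possibly merely hypothetical — generic extension of $V$, since the uniqueness of the model of $T+\phi$ is ZFC-provable and hence persists there; and in any such extension the fixed class $M=L(A)$ is still a transitive class model of $T+\phi$, because the relevant assertions about $M$ are internal (hence absolute), and closure under G\"odel operations, containing all ordinals, and almost universality survive (every subset of $M$ lies in some $(V_\rho)^M\in M$).

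First I would record that $A\subsetneq\mathbb R$: otherwise $M=L(\mathbb R)$, but the hypothesis gives that $L(\mathbb R)^{\sharp}$ exists, and $L(\mathbb R)^{\sharp}\notin L(\mathbb R)$ (else $L(\mathbb R)$ would recognise its own sharp), so there would be a real outside $M=L(\mathbb R)$, which is impossible. Next I would pass to a Cohen extension $V[g]$; then $g$ is in particular generic over $M$ for $\operatorname{Add}(\omega,1)^{M}=\operatorname{Add}(\omega,1)$, and I put $N:=\bigl(L(\mathbb R)\bigr)^{M[g]}=L(\mathbb R^{M[g]})$. Then $N$ is a transitive class, contains all ordinals, satisfies $V=L(\mathbb R)$, and $g\in\mathbb R^{N}\setminus A$, so $N\neq M$. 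It remains to see $N\models\mathrm{AD}+\phi$. Here the hypothesis "$A^{\sharp}$ exists for every $A\subseteq\mathbb R$", applied to subsets of $\mathbb R$ (in particular of $\mathbb R^M$), supplies the needed generic absoluteness: at the $\Sigma^2_1$ level it already yields $N\models\mathrm{AD}$, and upgrading it to the whole first-order theory of $L(\mathbb R)$ — so that $M\prec N$ and hence $N\models\phi$ — makes $N$ a transitive class model of $T+\phi$ distinct from $M$, contradicting that $\phi$ is a uniqueness statement.

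The hard part is precisely this last upgrade. That sharps for sets of reals preserve $\mathrm{AD}^{L(\mathbb R)}$ — essentially a $\Sigma^2_1$ statement — when one Cohen real is added is the standard $\Sigma^2_1$-absoluteness phenomenon; but $\phi$ is an a priori arbitrary $\in$-sentence, of unbounded complexity over $T$, and one must still show it survives the passage from $M=L(\mathbb R^M)$ to $L(\mathbb R^{M[g]})$, i.e. that this passage is fully elementary. This is where the strength of the hypothesis — sharps for \emph{every} set of reals, in conjunction with whatever large cardinals the background theory already carries in order to supply inner models — must be exploited; the remaining ingredients (persistence of $M$ in $V[g]$, the new real in $N$, the reduction to a hypothetical extension) are routine.
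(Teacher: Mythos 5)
There is a genuine gap, and it sits exactly where you place the ``hard part'': you never prove that $N:=L(\mathbb{R}^{M[g]})$ satisfies $\phi$ --- nor, in fact, that it satisfies AD. The hypothesis ``$A^{\sharp}$ exists for every $A\subseteq\mathbb{R}$'' is far too weak to be cited as a ``standard $\Sigma^2_1$-absoluteness phenomenon'' for adding a Cohen real: sharps for reals give Shoenfield-plus-one-level ($\Sigma^1_3$) absoluteness, and even preservation of $\mathrm{AD}^{L(\mathbb{R})}$ under small forcing is a deep theorem whose known proofs go through extender models with Woodin cardinals and derived-model/symmetric-collapse arguments, not through sharps alone. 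Upgrading to full elementarity of $M\to L(\mathbb{R}^{M[g]})$ --- which you need because $\phi$ is an arbitrary $\in$-sentence --- is generic absoluteness of the whole $L(\mathbb{R})$-theory, a statement in the region of infinitely many Woodin cardinals. So the proposal reduces the theorem to an unproved (and, from the stated hypothesis, not available ``off the shelf'') absoluteness principle; deferring it to ``the strength of the hypothesis must be exploited'' is not a proof. A smaller but real error: your argument that $A\subsetneq\mathbb{R}$ fails, since $\mathbb{R}^{\sharp}$ is a set of reals, not a real, so its existence outside $L(\mathbb{R})$ produces no real outside $M$; this step happens to be inessential, but as written it is wrong.

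The paper's proof avoids the absoluteness issue entirely and is organized differently. First, using sharps of sets of reals, it shows $\mathbb{R}^M$ must be countable: a countable elementary substructure of $(\mathbb{R}^M)^{\sharp}$ yields a set of reals $R$ with $L(R)$ a transitive class model having the same theory as $M$ (in particular satisfying $\phi$), contradicting uniqueness unless $\mathbb{R}^M$ is already countable. Second, it invokes Theorem 0.1 of Schindler--Steel to represent $M=L(\mathbb{R})^M$ as the symmetric collapse of $\mathrm{Col}(\omega,{<}\omega_1^M)$ over an extender model $L[\vec{E}]$ in which $\omega_1^M$ is a limit of Woodin cardinals; countability of $\mathbb{R}^M$ and the relevant sharps guarantee that this representation and two \emph{different} collapse generics exist in $V$. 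Homogeneity of the collapse then gives two distinct transitive class models with the same theory, contradicting uniqueness. In other words, the Woodin-cardinal machinery that your sketch implicitly needs for ``the last upgrade'' is exactly what the paper imports via Schindler--Steel, but applied to a homogeneous collapse over $L[\vec{E}]$ rather than to a Cohen extension of $M$; without supplying that machinery (or some substitute argument), your outline does not yield the theorem.
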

\begin{proof}
Suppose that $M$ is a canonical (proper class) inner model 
and that $T$ holds in $M$; let $\phi$ be the canonical sentence for $M$.

We first claim that $\mathbb{R}^M$ is countable. 
Otherwise, we can find a countable elementary substructure $R^\#$ of $(\mathbb{R}^M)^\#$, but then $L(R)$ is an inner model $M^{\prime}$ that has the the same theory as $M$; in particular, $M\models\phi$.
However, we will clearly have $M^{\prime}\neq M$, contradicting the uniqueness of $M$.

It now follows from Theorem 0.1 in \cite{ScSt} that we can pass to a generic extension of $M$ and find an extender 
sequence $\vec{E}$ with extenders below $\omega_1^M$ such that the symmetric 
collapse of $\mathrm{Col}(\omega,{<}\omega_1^M)$ over $M$ is equal to $L(\mathbb{R})^V$ (in fact, it is shown in \cite{ScSt} that $\omega_1^M$ is a limit of Woodin cardinals in $L[\vec{E}]$). 

Since $\mathbb{R}^M$ is countable and since $(\mathbb{R}^M)^\#$ exists, the required 
generic extension exists in $V$, as it can be chosen as a collapse below the least $\mathbb{R}^M$-indiscernible. 
Thus we can choose $L[\vec{E}]$ in $V$ and since $\omega_1^M$ is countable and $\vec{E}^\#$ 
exists, $P(\omega_1^M)^M$ is countable in $V$. Hence we can choose two different $\mathrm{Col}(\omega,{<}\omega_1^M)$-generics 
over $M$ in $V$. By homogeneity of the forcing, the extensions have the same theory, but this contradicts the uniqueness of $M$. 
\end{proof}

\section{Theory Canonicity}

The fact that canonicity rules out 
measurable cardinals almost trivially may suggest that it is too strong a demand; and indeed, uniquely fixing a transitive class model is rather much to ask.
A reasonable weakening would be that the statement $\phi$ only fixes transitive class models up to elementary equivalence, which is the intuition behind the concept of theory-canonicity, as defined above.

We observe that theory-canonicity still goes beyond ZFC, but allows for measurable cardinals:

\begin{prop}{\label{possible measurables}}
 The existence of measurable cardinals is theory-canonically possible.
\end{prop}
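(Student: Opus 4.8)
The plan is to take $\phi$ to be the $\in$-sentence ``$V=L[U]$ for some normal measure $U$'', i.e.\
$\exists\kappa\,\exists U\,\bigl[\,U\text{ is a normal }\kappa\text{-complete ultrafilter on }\kappa\ \wedge\ \forall x\,\exists\alpha\,(x\in L_{\alpha}[U])\,\bigr]$,
and to use Kunen's canonical inner model $L[\mathcal U]$ for one measurable cardinal as the witnessing model. Since our standing metatheory assumes enough large cardinals, we may assume that $V$ has a measurable cardinal $\kappa$ with normal measure $\mathcal U$, so that $L[\mathcal U]$ is an inner model of $V$; by Kunen's analysis of $L[U]$ (see \cite{Je}), $\mathcal U\cap L[\mathcal U]$ is a normal measure on $\kappa$ inside $L[\mathcal U]$, so $L[\mathcal U]\models\phi$ and $L[\mathcal U]\models$ ``there is a measurable cardinal''.

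It then remains to check that $\phi$ is theory-canonical, i.e.\ that $\mathrm{ZFC}$ proves that any two inner models $M,N$ of $\mathrm{ZFC}+\phi$ are elementarily equivalent. From the point of view of $V$ each such $M$ is of the form $L[U^{M}]$, where $U^{M}$ is, internally to $M$, a normal measure on some $\kappa^{M}$, and these are precisely the models to which Kunen's theory of $L[U]$ applies. Iterating the measures of $M$ and $N$ through the ordinals, the critical points form increasing continuous sequences whose sets of fixed points are club classes $C_{M},C_{N}$; at any $\delta\in C_{M}\cap C_{N}$ the corresponding iterates $M_{\delta}$ and $N_{\delta}$ each carry a normal measure on $\delta$ and equal its relativized constructible closure, so by the uniqueness of $L[U]$ we get $M_{\delta}=N_{\delta}$. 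Since iteration embeddings are elementary, $M\equiv M_{\delta}=N_{\delta}\equiv N$. As this argument is carried out in $\mathrm{ZFC}$ and $M,N$ were arbitrary, $\phi$ is theory-canonical, so $L[\mathcal U]$ is a theory-canonical class model of $\mathrm{ZFC}$ witnessing that the existence of a measurable cardinal is theory-canonically possible.

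The main obstacle is making this comparison argument go through uniformly in $\mathrm{ZFC}$: one must know that the iterates $M_{\delta},N_{\delta}$ above are well-founded, i.e.\ that inner models of $\phi$ are iterable, and one must invoke the uniqueness of $L[U]$ in the form needed here. The delicate case is an inner model $M$ of $\phi$ whose internal measure fails to be countably complete in $V$ --- for instance $M=L[U]$ when $V$ arises from a model with two measurables by Prikry forcing at the smaller one --- so that the external iteration of $M$ breaks down; one then has to argue that the first-order theory of such an $M$ still coincides with that of the genuinely iterable $L[\mathcal U]$, i.e.\ that the theory of $L[\mu]$ is a $\mathrm{ZFC}$-provably rigid notion, so that elementary equivalence persists even when the comparison cannot literally be formed inside $V$.
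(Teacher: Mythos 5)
Your proposal is essentially the paper's own argument: the paper takes the same sentence $\phi$ (``$V=L[U]$, $U$ a normal ultrafilter'') and proves theory-canonicity by exactly the coiteration-to-a-common-iterate argument that you spell out, with elementary equivalence coming from the elementarity of the iteration maps; your added remarks on the existence of the witnessing model $L[\mathcal{U}]$ just make explicit the large-cardinal assumption the paper places in its metatheory. The obstacle you raise in your final paragraph is not a real one, so no further argument is needed there: since the measure of an inner model $M\models$ ``$V=L[U]$'' is an \emph{element} of $M$, the entire iteration (ultrapowers, direct limits, embeddings) is computed inside $M$, and Gaifman's theorem applied within $M$ yields well-foundedness which transfers to $V$ --- any externally given descending sequence in an iterate would be mapped by $M$'s transitive collapse to a genuine descending $\in$-sequence. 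Countable completeness of the measure in $V$ is only needed when one iterates by an \emph{external} $M$-ultrafilter (as in the theory of $0^{\sharp}$); in your Prikry example $L[U]$ remains fully iterable in the extension, so your comparison argument already goes through verbatim in ZFC, with no need for a separate ``rigidity of the theory of $L[\mu]$'' claim.
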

\begin{proof}
 Let $\phi$ be the statement `$V=L[U]$, where $U$ is a normal ultrafilter'. Now, every two transitive class models $M$ and $M^{\prime}$ of $\phi$ can be coiterated to a common target model $N$ (see, e.g., \cite{Je}, Theorem 19.14),
 which will be elementary equivalent to both $M$ and $M^{\prime}$; thus $M$ and $M^{\prime}$ are elementary equivalent. Hence $\phi$ is tc. Clearly, any transitive class model $M$ of $\phi$ 
 contains a measurable cardinal.
\end{proof}

\begin{prop}{\label{tc informative}}
 There is an $\in$-sentence $\psi$ that is theory-canonically necessary, but does not follow from ZFC.
\end{prop}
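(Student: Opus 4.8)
The plan is to imitate the self-referential forcing examples in this paper (the Cohen example preceding Theorem \ref{gmacn}, and the strong-canonicity theorem), taking $\psi$ to be the statement ``$V$ is not a $\mathbb{Q}$-generic extension of $L$'' for a suitably chosen forcing $\mathbb{Q}$. The reason the Cohen forcing cannot be reused here is that it is weakly homogeneous, so all of its generic extensions of $L$ are elementarily equivalent; this actually makes ``$V$ is a Cohen extension of $L$'' a theory-canonical sentence rather than defeating theory-canonicity. I would therefore use a forcing whose generic extensions of $L$ realize densely many different first-order theories.

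Concretely, let $\mathbb{Q}$ be a class-length reverse Easton iteration over $L$ coding a ``generic $\mathrm{GCH}$ pattern'': at each regular cardinal $\kappa$ force a bit $b_\kappa$, realized so that $2^\kappa=\kappa^{++}$ if $b_\kappa=1$ and $2^\kappa=\kappa^{+}$ otherwise, while $\mathrm{GCH}$ is preserved at singular cardinals. By Easton's theorem this is a tame class forcing with $L[G]\models\mathrm{ZFC}$, the class $A_G=\{\kappa:2^\kappa=\kappa^{++}\}$ is definable in $L[G]$ with $L[G]=L[A_G]$, and — exactly as in \cite{R}, where the ground axiom is shown to be $\in$-expressible — the statement ``$V$ is the $\mathbb{Q}$-extension of $L$'' is expressible by an $\in$-formula. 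Let $\psi$ be its negation. Then $\psi$ is not provable in $\mathrm{ZFC}$: any $\mathbb{Q}$-generic extension of $L$ (which exists over a countable model of $L$) satisfies $\neg\psi$, so $\mathrm{ZFC}+\neg\psi$ is consistent.

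It remains to argue that $\psi$ is theory-canonically necessary. Suppose not: let $M$ be a theory-canonical inner model of $\mathrm{ZFC}$ witnessed by a $\mathrm{tc}$ sentence $\phi$, with $M\models\neg\psi$, say $M=L[G]$ for a $\mathbb{Q}$-generic $G$ over $L$, and fix $p\in G$ with $p\Vdash^{L}_{\mathbb{Q}}\phi$. Arguing as in Theorem \ref{gmacn} — passing if necessary to a generic extension of $M$, which need not actually exist, the point being only that $\mathrm{ZFC}$ cannot prove all inner models of $\mathrm{ZFC}+\phi$ elementarily equivalent — I would produce two $\mathbb{Q}$-generics $G_1,G_2$ over $L$, both containing $p$, with $L[G_1]\not\equiv L[G_2]$; since $p\Vdash\phi$, both are inner models of $\mathrm{ZFC}+\phi$, contradicting that $\phi$ is $\mathrm{tc}$. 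The heart of the matter, and the step I expect to be the main obstacle, is the combinatorial fact that no condition of $\mathbb{Q}$ decides the whole first-order theory of $L[G]$: below every condition $p$ some parameter-free $\in$-sentence must remain undecided. The idea is that the support of $p$ is a set, hence bounded, so the coding above it is completely free; one reads off a concrete natural number $n$ from $p$ (the number of coded $\mathrm{GCH}$-failures below the first stage untouched by $p$) and steers the free part to get $p_1,p_2\leq p$ forcing different values of ``the successor rank of the $n$-th element of $A_G$'', a feature that is expressible without parameters because $A_G$ is definable in $L[G]$. Making this precise — respecting Easton support when prescribing a block of the pattern, and ensuring the relevant index is genuinely a standard natural number rather than an undefinable ordinal — is where the real work lies; the $\in$-expressibility of $\psi$ is a secondary point, handled as in \cite{R}.
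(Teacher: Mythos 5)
Your overall strategy (a self-referential ``I am not a $\mathbb{Q}$-extension of $L$'' statement, defeated by splitting a condition that forces the putative tc sentence) is the right one, but the step you yourself identify as the heart of the matter is exactly where the argument has a genuine gap, and your sketch of it does not work as stated. In your single class-length iteration coding a bit at every regular cardinal, a condition $p$ need not determine a \emph{finite} number of coded GCH-failures below its first untouched stage: an Easton condition can have infinite support (say, touching every $\aleph_n$ for $n<\omega$, or much more), so the ``concrete natural number $n$ read off from $p$'' can be an infinite ordinal, and then ``the $n$-th element of $A_G$'' is no longer expressible by a parameter-free sentence. More generally, the first coordinate untouched by $p$ need not be definable without parameters in the extensions under consideration, so it is not clear that every condition leaves some parameter-free sentence undecided -- and that is precisely the statement you need (that no $p$ decides the full theory of $L[G]$). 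Patching this requires real work, e.g.\ first extending $p$ so as to create a definable ``landmark'' in the pattern with a still-free bit just beyond it, and arguing this uniformly for arbitrary $p$. The paper's proof of Proposition \ref{tc informative} is designed to avoid exactly this issue: it codes only a \emph{real} $x$ into the continuum function at the cardinals $\aleph_i$ for $i<\omega$, and it factors the construction as Cohen forcing (adding $x$) followed by the Easton forcing $P_x$ (coding $x$). The Easton condition $q$ forcing the tc sentence $\psi$ is pulled back to a Cohen condition $p$, and two incompatible extensions of $p$ yield distinct reals $x_r\neq x_{r'}$; distinct reals differ at a \emph{standard} index $i$, so the two resulting models disagree about the parameter-free sentence $2^{\aleph_i}=\aleph_{i+1}$, with no definability-of-indices problem arising at all.

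A secondary but nontrivial point: you appeal to \cite{R} for the $\in$-expressibility of ``$V$ is a $\mathbb{Q}$-generic extension of $L$'', but Reitz's argument concerns set forcing; for a class-length iteration, genericity over $L$ means meeting all definable dense subclasses, which is prima facie a scheme rather than a single first-order sentence (there is no uniform truth predicate for $L$), so expressibility of your $\psi$ needs its own argument. The paper's statement avoids this too, since there the forcing $P_x$ is a set forcing over $L[x]$ and ``$V=(L[x])[G]$ with $G$ $P_x$-generic over $L[x]$, $x$ Cohen-generic over $L$'' is straightforwardly first-order.
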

\begin{proof}
For $x\subseteq\omega$, let $T_{x}$ be the theory $\{2^{\aleph_{i}}=\aleph_{i+1}:i\in x\}\cup\{2^{\aleph_{i}}=\aleph_{i+2}:i\notin x\}$. Thus, in a model of $T_{x}$, $x$ is encoded in the continuum function.
Denote the Easton forcing that forces $T_{x}$ to be true over a model of GCH by $P_{x}$.

Now let $\phi$ be the sentence that claims that $V$ is not of the form $(L[x])[P_{x}]$, where $x$ is Cohen-generic over $L$. Clearly, $\phi$ does not follow from ZFC, as it e.g. fails in models of the form $(L[x])[P_{x}]$ just 
described.

We claim that $\phi$ is tcn. Suppose otherwise; then there are a tc statement $\psi$ and a transitive class model $M$ of ZFC+$\psi+\neg\phi$. Then there is a real number $x$
which is Cohen-generic over $L$ such that $M=(L[x])[P_{x}]$. By the forcing theorem, $\psi$ is forced by some Easton-condition $q$ in $P_{x}$.
The fact that $q$ belongs to $P_{x}$ and that $\psi$ is forced by $q$ in $P_{x}$, which holds in $L[x]$, is in turn forced by some Cohen-condition $p$ over $L$.

Now let $r$ and $r^{\prime}$ be two incompatible extensions of $p$, and let $x_{r}$ and $x_{r^{\prime}}$ be Cohen-generic reals extending $r$ and $r^{\prime}$, respectively.

Then both in $L[x_{r}]$ and in $L_[x_{r^{\prime}}]$, we have that $q\Vdash\psi$. Pick a $P_{x_{r}}$-generic filter $G_{r}$ and a $P_{x_{r^{\prime}}}$-generic filter $G_{r^{\prime}}$, both containing $q$; 
this is possible as $q$ belong both to $P_{x_{r}}$ and to $P_{x_{r^{\prime}}}$, because $r$ and $r^{\prime}$ extend $p$, which forces this.

Then $\psi$ holds both in $M_{r}:=(L[x_{r}])[G_{r}]$ and in $M_{r^{\prime}}:=(L[x_{r^{\prime}}])[G_{r^{\prime}}]$; however, as $x_{r}\neq x_{r^{\prime}}$, there is some $i\in\omega$ such that 
$M_{r}\models 2^{\aleph_{i}}=\aleph_{i+1}$ and $M_{r^{\prime}}\models 2^{\aleph_{i}}=\aleph_{i+2}$ or vice versa. Thus $M_{r}$ and $M_{r^{\prime}}$ are two models of 
ZFC+$\psi$ that are not elementary equivalent, which contradicts the assumption that $\psi$ is tc.
\end{proof}

Concerning the size of the continuum however, theory canonicity has little information to offer:

\begin{prop}{\label{tc continuum}}
 When $\iota$ is definable in $L$ and $\text{cf}(\iota)>\omega$, then there is a tc model of ZFC+$2^{\aleph_{0}}=\aleph_{\iota}$. That is, any possible value of the continuum that is definable is also
 theory-canonically possible.
\end{prop}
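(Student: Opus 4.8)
The plan is to adapt the method used in the proof of Proposition \ref{tc informative}: encode a fixed definable target value $\aleph_\iota$ for $2^{\aleph_0}$ into a forcing extension of $L$ by a \emph{definable} forcing, in such a way that the statement ``$V$ is a generic extension of $L$ by this definable forcing'' is theory-canonical. The key point is that, unlike in Proposition \ref{tc informative}, we do not need to code a real into the continuum function; we need only force a single, $L$-definable value. So I would take $\phi$ to be the sentence ``$V = L[G]$, where $G$ is generic for $\mathrm{Add}(\omega,\aleph_\iota)$ as computed in $L$'' (the Cohen forcing adding $\aleph_\iota$-many reals, with $\iota$ read off by its defining formula in $L$, using $\mathrm{cf}(\iota)>\omega$ to guarantee the value of the continuum is preserved and genuinely equals $\aleph_\iota$ rather than being pushed up by cofinality considerations --- here one invokes the standard $\mathrm{cf}(2^{\aleph_0})>\omega$ constraint, which is why the cofinality hypothesis appears).

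The steps, in order, would be: (1) Check that $\mathrm{Add}(\omega,\aleph_\iota^L)$ is definable without parameters over any model of ZFC, since $\iota$ is definable in $L$ and $L$ is definable; call this forcing $\mathbb{Q}$. (2) Verify that any transitive class model $M\models \mathrm{ZFC}+\phi$ satisfies $2^{\aleph_0}=\aleph_\iota$: this is the usual Cohen-forcing continuum computation, where $\mathrm{cf}(\iota)>\omega$ and the ccc-ness of $\mathbb{Q}$ ensure cardinals and cofinalities are preserved and the nice-name counting gives exactly $\aleph_\iota$ reals. (3) Show $\phi$ is tc: given two transitive class models $M=L[G]$ and $M'=L[G']$ of $\mathrm{ZFC}+\phi$, both arise from the \emph{same} ground model $L$ (which is absolutely definable and the same in both, by Shoenfield/absoluteness of $L$) by forcing with the \emph{same} poset $\mathbb{Q}$; since $\mathrm{Add}(\omega,\kappa)$ is a weakly homogeneous forcing, the forcing theorem gives $L[G]$ and $L[G']$ the same theory --- indeed $p \Vdash \sigma$ fails for every condition $p$ whenever $\sigma$ is false in one extension, and homogeneity upgrades this to truth being decided by $\mathbbm{1}$. (4) Conclude $M$ and $M'$ are elementarily equivalent, so $\phi$ is tc, and by (2) every transitive class model of $\mathrm{ZFC}+\phi$ models $2^{\aleph_0}=\aleph_\iota$; hence such a value is tcp, provided such a model exists --- which it does, e.g.\ take $L[G]$ for an actual $\mathbb{Q}$-generic $G$ over $L$ inside $V$ (granting enough of $V$, or arguing as before that it suffices to know ZFC cannot refute the configuration).

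The main obstacle I expect is step (3), specifically making the homogeneity argument fully rigorous across \emph{proper class} models rather than set models: one must argue that the automorphism density of $\mathrm{Add}(\omega,\kappa)$ still yields ``$\mathbbm{1}$ decides the theory'' when the generic is a class and the forcing is a class-sized product --- but in fact $\mathrm{Add}(\omega,\kappa)$ for $\kappa$ a cardinal is a \emph{set} forcing from the point of view of $L$, so this is not actually a class forcing issue and the classical homogeneity lemma applies verbatim inside $L$; the real care needed is only in confirming that ``$M = L[G]$ with $G$ generic for $\mathbb{Q}^L$'' is genuinely first-order expressible (which it is, by the same ground-model-definability apparatus cited from \cite{R} and used throughout the paper). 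A secondary subtlety is ensuring $\aleph_\iota^L$ really equals $\aleph_\iota^M$, i.e.\ that $\mathbb{Q}$ collapses no cardinals and that the definition of $\iota$ is absolute between $L$ and $L[G]$ --- both follow from ccc-ness and the fact that $\iota$'s definition is evaluated in $L$, which sits inside $M$ unchanged.
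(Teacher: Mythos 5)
Your proposal is correct and follows essentially the same route as the paper: force over $L$ with the ($L$-definable, since $\iota$ is definable in $L$) Cohen forcing for making $2^{\aleph_0}=\aleph_\iota$, use ccc-ness and $\mathrm{cf}(\iota)>\omega$ for the continuum computation, and use homogeneity of the forcing to conclude that all such extensions of $L$ have the same theory, so the statement ``$V$ is a $P_\iota$-generic extension of $L$'' is tc. Your additional remarks on first-order expressibility and on $\aleph_\iota^L=\aleph_\iota^M$ only make explicit points the paper leaves implicit.
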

\begin{proof}
 Let $P_{\iota}$ be the Cohen forcing notion for achieving $2^{\aleph_{0}}=\aleph_{\iota}$ over $L$. Then $P_{\iota}$ is definable from $\iota$, which is definable by assumption,
 so $P_{\iota}$ is definable in $L$. Now the statement $\phi\equiv$`$V$ is a $P_{\iota}$-generic extension of $L$' in a transitive class model $M$ of 
 ZFC implies that $M\models2^{\aleph_{0}}=\aleph_{\iota}$ and moreover, by homogenity of $P_{\iota}$, all sentences true in $M$ are forced by $1$ and thus hold in all such extensions. 
 Thus $M$ is a tc model in which $2^{\aleph_{0}}=2^{\aleph_{\iota}}$ holds.
\end{proof}

\begin{remark}: Note that the proof of Proposition \ref{tc continuum} also implies that GMA is not theory-canonically necessary, as all Cohen-extensions of $L$ have the same theory, `$V$ is a Cohen-extension of $L$'
is expressable as an $\in$-sentence $\phi$ and GMA clearly fails in every transitive class model of $\phi$.
\end{remark}

We can also show that $\neg\text{AC}$ is theory-canonically possible. 
We know the following lemma from a talk of Karagila; however, it has apparently so far not been made explicit in the literature. We thank Asaf Karagila for a sketch of the proof (personal communication) and the kind permission to include it here. 

We say that a transitive class model of ZF is a `Cohen-model of the first type' if and only if it is a symmetric extension of $L$ in the sense of the first Cohen model as described, e.g., in \cite{JechAC}, chapter 5.3. 

\begin{lemma}{\label{symmext axiomatizable}}
There is an $\in$-formula $\phi_{\text{cs}}$ such that, for a transitive class $M\models$ZF, we have that $M\models\phi_{\text{cs}}$ if and only if $M$ is a Cohen model of the first type.\footnote{We thank Asaf Karagila for informing us about this following folklore result and sketching the proof below to us.} 
\end{lemma}
\begin{proof}
By Theorem 4.2 of 
\cite{KS}, we have that, if $M$ is a Cohen model of the first type, then there is an $M$-generic filter $F$ such that $M[F]=L[F]$, $L[F]\models$ZFC and $M$ is a symmetric submodel of $L[F]$. By Remark 3.3 of \cite{KS}, we have $M=L(A)$, where $A$ is the set of Cohen reals added by $F$ over $L$. 

So we can express $\phi_{\text{cs}}$ as `There is a generic filter $F$ such that $M$ is the symmetric submodel of $L[F]$'. (More precisely, we mean that $M$ is the class of the $F$-evaluations of the symmetric names.)  
\end{proof}

\begin{prop}{\label{tc choice}}
There is a canonical theory $T$ that extends ZF+$\neg\text{AC}$.
\end{prop}
\begin{proof}

Consider the statement $\phi_{\text{cs}}$ from Lemma \ref{symmext axiomatizable}, and let $M$ be a transitive class model of ZFC+$\phi_{\text{cs}}$. 

For such an $M$, we have $M\models\neg$AC; moreover, if $\psi$ is a sentence such that $M\models\psi$, then, by homogenity of the Cohen-forcing, we have $\mymathbb{1}\Vdash\psi$, and hence, any two transitive class models of $\phi_{\text{cs}}$ are elementary equivalent.

\end{proof}

\section{Further Ideas and Questions}

\begin{question}
Is $V=HOD$ (theory-)canonically necessary?
\end{question}

\begin{question}
Is CH canonically necessary? Is GCH canonically necessary? 
\end{question}

We conjecture that the answer to the last question is negative; a natural approach is to iterate Friedman's forcing for adding a $\Pi^{1}_{2}$-singleton (described in \cite{Fr}, chapter $6$) $\omega_2$ many times to generate a canonical model in which CH fails.

\begin{question}
(Dominik Klein): Is AC canonical for models of ZF? That is, is there a formula $\phi$ such that there is a unique transitive class model $M$ of ZF$+\phi$ and such that AC fails in $M$? 

More boldly, is there a canonical model of ZF+AD?

\end{question}

By a theorem of Kunen \cite{Kunen}, AC fails in the model $C^{\omega_2}$, which is constructed in analogy with $L$, but using definability in the infinitary language $L_{\omega_{1},\omega_{1}}$. This motivates the investigation of canonical implication for infinitary languages, which we plan to consider in future work.

\begin{question}
Can $M_1^\#$ be an element of a canonical model? 

\end{question}

\begin{question}
In general, which ZFC axioms are (theory-)canonical over the others? Or over KP? Are there e.g. canonical models for ZFC$^{-}$ in which power set fails? Are there canonical models of ZFC without replacement in which replacement is false?
\end{question}

The Ehrenfeucht principles asserts that, when $a$ is definable from $b$, but $a\neq b$, then $a$ and $b$ have different types; there are several variantes of this principle, discussed in, e.g., \cite{Enayat}, \cite{Mycielski}, \cite{FGH}. 
 Following \cite{Enayat1}, a model $M$ of ZF is `Leibnizean' if and only if any two elements of $M$ have different types with ordinal parameters.
 
 \begin{question}
 Does every canonical model of ZFC satisfy the Ehrenfeucht principle? Is every canonical model of ZFC Leibnizean?
 \end{question}

\begin{question}
Is there a proof calculus that is sound and complete for $\models_{\text{c}}$ and $\models_{\text{tc}}$? (Note that, by failure of compactness, such a calculus cannot consist of a finite set of finite rules.) 
Are there inference rules $\mathcal{R}$ such that, if $\mathcal{R}$ allows deducing $\phi$ from $T$, then $T\models_{c}\phi$ (or $T\models_{\text{tc}}\phi$), but $\phi$ can fail in transitive class models of $T$?
\end{question}

A concept closely related to, but different from, (theory-)canonicity is that of `unique describability': Namely, we say that $\phi(\alpha)$ has \textit{unique models} if and only if, for every transitive class model $N$ of ZFC, there is at most one inner model $M$ of $N$ such that $M\models\phi(\alpha)$. This more liberal notion is closely related to recognizability by Ordinal Turing Machines (see \cite{CSW}) and is considered in ongoing work with Philip Welch. 

\section{Acknowledgements}

We thank Philip Welch for his kind permission to include Theorem \ref{sigma2-kp is enough} in this work. We also thank Asaf Karagila for sketching a proof of Lemma \ref{symmext axiomatizable} to us.

\section{Funding} 
Open Access funding enabled and organized by Projekt DEAL. 
This research was funded in whole or in part by EPSRC grant number EP/V009001/1 of the second-listed author. For the purpose of open access, the authors have applied a `Creative Commons Attribution' (CC BY) public copyright licence to any Author Accepted Manuscript (AAM) version arising from this submission.

\end{document}